\theoremstyle{definition}
\newtheorem{dfn}{Definition}[subsection]
\newtheorem{dfnS}{Definition}[section]
\newtheorem{rmk}[dfn]{Remark}
\newtheorem{rmkS}[dfnS]{Remark}
\newtheorem{exm}[dfn]{Example}
\theoremstyle{plain}
\newtheorem{thm}[dfn]{Theorerm}
\newtheorem{lem}[dfn]{Lemma}
\newtheorem{prop}[dfn]{Proposition}
\newtheorem{coro}[dfn]{Corollary}
\title{On mod 2 arithmetic Dijkgraaf-Witten invariants for certain real quadratic number fields}
\author{Hikaru Hirano}
\date{}
\begin{document}

\maketitle

\begin{abstract}
Minhyong Kim introduced arithmetic Chern-Simons invariants for totally imaginary number fields as arithmetic analogues of the Chern-Simons invariants for 3-manifolds. In this paper, we extend Kim's definition for any number field, by using the modified \'etale cohomology groups and fundamental groups which take real places into account. We then show explicit formulas of mod 2 arithmetic Dijkgraaf-Witten invariants for real quadratic fields $\mathbb{Q} (\sqrt{p_1  p_2  \cdots  p_r})$, where $p_i$ is a prime number congruent to 1 mod 4, in terms of the Legendre symbols of $p_i$'s. We also show topological analogues of our formulas for 3-manifolds.

\end{abstract}

\renewcommand{\thefootnote}{\fnsymbol{footnote}}
\footnote[0]{2010 {\it Mathematics Subjct Classification.} 11R37, 11R80, 14F20, 57M27, 81T45.\\
Key words: modified \'etale cohomology, arithmetic Chern-Simons invariant, arithmetic Dijkgraaf-Witten invariant, arithmetic topology }

\section{Introduction}

\ \ \ \ In recent years, Minhyong Kim ([K], [CKKPY]) initiated to study arithmetic Chern-Simons theory for  number fields as an arithmetic analogy of the Dijkgraaf-Witten theory for 3-manifolds ([DW]), based on the analogies between number rings and 3-manifolds, primes and knots in arithmetic topology ([Mo]). Kim's theory is concerned with totally imaginary number fields, since  it employs some results on \'etale cohomology groups of the integer rings of totally imaginary number fields ([Ma]), which no longer hold for  number fields with real places. So it is desirable to extend Kim's theory for number fields with real places. \par

In this paper, we extend Kim's theory for  number fields with real places, by using the modified \'etale cohomology groups and the modified \'etale fundamental groups which take real primes into account, and we then compute explicitly the {\rm mod} 2 arithmetic Dijkgraaf-Witten invariants for real quadratic fields $\mathbb{Q} (\sqrt{p_1  p_2  \cdots  p_r})$, where $p_i$ is a prime number congruent to 1 mod 4, in terms of the Legendre symbols of $p_i$'s. Let us give an outline of the construction of arithmetic Chern-Simons invariants and arithmetic Dijkgraaf-Witten invariants in the following. \par

Let $K$ be a finite algebraic number field containing $n$-th roots of unity. Note that if $K$ has a real prime, $n$ must be 2. We choose a primitive $n$-th root of unity $\zeta_n$ in $K$ which induces the isomorphism $ \mathbb{Z}/ n \mathbb{Z} \cong \mu_n $.  Let ${\cal O}_K$ be the ring of integers of $K$ and let $X\ =\ {\rm Spec}\ {\cal O}_K$ be the prime spectrum of ${\cal O}_K$. Let $X_{\infty}$ denote the set of infinite primes of $K$ and we set $\overline{X} \ = \ X \sqcup X_{\infty}$. Following [B] and [AC], we can introduce a Grothendieck topology (site) ${\overline{X}}_{{\rm \acute{e}t}}$, called the Artin-Verdier site, and the topos Sh$({\overline{X}}_{{\rm \acute{e}t}})$ of abelian sheaves on ${\overline{X}}_{{\rm \acute{e}t}}$. Thus we have the modified \'etale cohomology groups ${{\rm H}}^i({\overline{X}},  F)$ for $F \in {\rm Sh}({\overline{X}}_{\rm \acute{e}t})$ and $i \geq0 $. These cohomology groups enjoy the 3-dimensional Artin-Verdier duality and we have the fundamental class isomorphism ${{\rm H}}^3({\overline{X}},  \mathbb{Z}/ n \mathbb{Z}) \cong \mathbb{Z}/ n \mathbb{Z}$ that depends on $\zeta_n$. We may also have the category of finite \'etale coverings over $\overline{X}$, which is proven to be a Galois category. Thus we have the modified \'etale fundamental group $\pi_1(\overline{X})$. Now, let $A$ be a finite group and let $c \in {{\rm H}}^3 (A,  \mathbb{Z}/ n \mathbb{Z})$. We set ${\cal M}(\overline{X},A) ={\rm Hom}_c(\pi_1(\overline{X}),A) / A$ be the set of conjugate classes of all continuous homomorphisms $\pi_1(\overline{X}) \rightarrow A$. Then, for $\rho \in {\cal M}(\overline{X},  A)$, the  {\it arithmetic Chern-Simons invariant} $CS_c(\rho)$ of $\rho$ associated to $c$ is defined by the image of $c$ under the composition of the maps
\begin{equation}
{{\rm H}}^3( A,  \mathbb{Z}/n \mathbb{Z}) \xrightarrow{\rho^{\ast}} {{\rm H}}^3( \pi_1(\overline{X}),  \mathbb{Z}/n \mathbb{Z}) \xrightarrow{j_3} {{\rm H}}^3( \overline{X},  \mathbb{Z}/n \mathbb{Z}) \cong \mathbb{Z}/n \mathbb{Z}, \nonumber
\end{equation}
where $j_3$ is the edge homomorphism in the modified Hochschild-Serre spectral sequence ${{\rm H}}^p( \pi_1(\overline{X}),  {{\rm H}}^q( \tilde{\overline{X}},  \mathbb{Z}/n \mathbb{Z} )) \Rightarrow {{\rm H}}^{p+q} ( \overline{X},  \mathbb{Z}/n \mathbb{Z} )$ (see the section 2 for $\tilde{\overline{X}}$).
The  {\it arithmetic Dijkgraaf-Witten invariant} of $\overline{X}$  associated to $c$ is then defined by
\begin{equation}
\displaystyle{Z_c (\overline{X}) = \sum_{\rho \in  {\cal M}(\overline{X},A)} {\mathrm exp} \left( \frac{2 \pi i}{n} CS_c(\rho) \right)   }. \nonumber
\end{equation}
\par

The basic problem may be to compute concretely ${CS}_c(\rho)$ and $Z_c(\overline{X})$. The papers [CKKPPY], [CKKPPY] and [BCGKPT] are concerned with this problem for the cases where $K$ is totally imaginary and $c$ is some specific cocycle.
 In this paper, we consider the case where $K$ is the real quadratic field $\mathbb{Q} (\sqrt{p_1  p_2  \cdots  p_r})$,  $p_i$ being a prime number congruent to 1 mod 4, $A=\mathbb{Z}/2\mathbb{Z}$ and $c$ is the non-trivial cocycle in ${{\rm H}^3}(A,  \mathbb{Z}/2\mathbb{Z})=\mathbb{Z}/2\mathbb{Z}$. For this, we firstly generalize a result in [AC] and [BCGKPT], which describes ${CS}_c(\rho)$ by the Artin symbol in unramified class field theory, for any number fields. Then, by using Gauss' genus theory, we compute explicitly ${CS}_c(\rho)$ and ${Z}_c(\overline{X})$ for the above case, in terms of the Legendre symbols among $p_i$'s. \par Following the analogies in arithmetic topology, in Appendix, we show a topological counterpart of our main result in the context of Dijkgraaf-Witten theory for 3-manifolds. \par
Here are the contents of this paper. In the section 2.1, notations being as above, we introduce the Artin-Verdier site ${\overline{X}}_{{\rm \acute{e}t}}$ and the category ${{\rm FEt}}_{\overline{X}}$ of finite \'etale coverings over $\overline{X}$. We show that ${{\rm FEt}}_{\overline{X}}$ is a Galois category and define the modified \'etale fundamental group $\pi_1(\overline{X})$ as the automorphism group of the fiber functor of ${{\rm FEt}}_{\overline{X}}$. In the section 2.2, we introduce the topos ${\rm Sh}({\overline{X}}_{{\rm \acute{e}t}})$ of abelian sheaves on 
${\overline{X}}_{{\rm \acute{e}t}}$ and define the modified \'etale cohomology groups ${{\rm H}}^i(\overline{X},F)$ for $F \in {\rm Sh}({\overline{X}}_{{\rm \acute{e}t}})$ and $i \geq 0$. We also show the modified Hochschild-Serre spectral sequence. In section 3, by using the materials prepared in the sections 2.1 and 2.2. In the section 4, we firstly extend a result in [AC] for $\overline{X}$. Then, we compute explicitly mod 2 $CS_{c}(\rho)$ and $Z_c(\overline{X})$ for $K=\mathbb{Q} (\sqrt{p_1  p_2  \cdots  p_r})$, $p_i \equiv 1\  {\rm mod} \ 4 $. In Appendix, we show a topological counterpart of our formulas in the section 4, in the context of Dijkgraaf-Witten theory for 3-manifolds.  \par
The contents of this paper were announced by the author at the workshop ``Low dimensional topology and number theory XI" held in Osaka University on March of 2019. During the preparation of this paper after that, we found the paper [LP] which also studies the arithmetic Chern-Simons theory for number fields with real primes. They use compactly supported \'etale cohomology groups. \\\\ \noindent
{\it Notation.}\ As usual, we denote by $\mathbb{Z}$, $\mathbb{Q}$, $\mathbb{R}$, and $\mathbb{C}$ the ring of rational integers, the field of rational numbers, the field of real numbers and the field of complex numbers,  respectively. For a commutive ring $R$, we denote by $R^{\times}$ the group of units in $R$.\\
For a number field $K$, we denote by ${\cal O}_K$ the ring of integers of $K$.  We denote by $I_K$ the group of fractional ideals of $K$, and ${\rm N} \mathfrak{a} $ denotes the norm of $\mathfrak{a} \in I_K$. We denote by ${\rm Cl}_K$, (resp. ${\rm Cl}^{+}_K$ ) the ideal class group, (resp. the narrow ideal class group) of $K$. \\\\ \noindent
{\bf{\it Acknowledgement.}}\ The author woulld like to thank his supervisor Masanori Morishita for suggesting the problem studied in this paper. He is also thankful to Junhyong Kim for discussion and to Yuji Terashima for communication.

\section{The modified \'etale cohomology groups for a number ring}

\ \ \ In the section 2.1, following [AC] and [B], we recall the Artin-Verdier site for a number field. We then define the modified \'etale fundamental group of the ring of integers, taking the infinite primes into account. In the section 2.2, we define the modified \'etale cohomology groups following [AC] and [B], and then we show the Hochschild-Serre spectral sequence.
\subsection{The Artin-Verdier site and the modified \'etale fundamental group}
\ \ \ Let $K$ be a finite algebraic number field and let $X={\rm Spec}\ {\cal O}_k$ be the prime spectrum of the ring ${\cal O}_k$ of integers of  $K$. Let $X_{\infty}$ be the set of infinite primes, namely real primes and pairs of conjugate complex primes of $K$, and we set $\overline{X} \ = \ X \sqcup X_{\infty}$. A real prime of a scheme $Y$ \'etale over $X$ is defined by a point $y:{\rm Spec}\ \mathbb{C} \rightarrow Y$ which factors through ${\rm Spec}\ \mathbb{R}$. A complex prime of $Y$ is defined by a pair of complex conjugate points $y,\overline{y}:{\rm Spec}\ \mathbb{C} \rightarrow Y$ such that $y \neq \overline{y}$. An infinite prime of $Y$ is a real prime or a complex prime of $Y$. Let $Y_{\infty}$ be the set of infinite primes of $Y$. Note that an  \'etale morphism $f:Y\rightarrow X$ induces $f_{\infty}:Y_{\infty} \rightarrow X_{\infty}$. We say that $f_{\infty}$ is unramified at $y_{\infty} \in Y_{\infty}$ if $y_{\infty}$ is a real prime or if $y_{\infty}$ and $f_{\infty}(y_{\infty})$ are complex primes. For  the Grothendieck topologies, we refer to [Ar], [T].
\begin{dfn}[{ [AC; Definition 2.1], [B; Proposition 1.2]}]
 The {\it Artin-Verdier site} of $\overline{X}$, denoted by ${\overline{X}}_{\rm \acute{e}t}$, is the Grothendieck topology consisting of the category ${\rm Cat}({\overline{X}}_{{\rm \acute{e}t}})$ and a set ${\rm Cov}({\overline{X}}_{{\rm \acute{e}t}})$ of coverings defined as follows.\\
・An object in ${\rm Cat}({\overline{X}}_{{\rm \acute{e}t}})$ is a pair $(Y,M)$, where $f:Y \rightarrow X$ is a scheme \'etale over X and $M \subset Y_{\infty} $ such that $f_{\infty}| : M \rightarrow X_{\infty}$ is unramified. A morphism $\varphi : (Y_1,M_1) \rightarrow (Y_2,M_2)$ in ${\rm Cat}({\overline{X}}_{{\rm \acute{e}t}})$ is a morphism of shemes $\varphi : y_1 \rightarrow Y_2$ over X such that the induced map $\varphi_{\infty} : {(Y_1)}_{\infty} \rightarrow {Y_2}_{\infty}$ satisfies $\varphi_{\infty} (M_1) \subset M_2$.\\
・A covering in ${\rm Cov}({\overline{X}}_{{\rm \acute{e}t}})$ is a family of morphisms $ \{ \varphi_i : (Y_i,M_i) \rightarrow (Z,N) \}_{i \in I}$ in ${\rm Cat}({\overline{X}}_{{\rm \acute{e}t}})$ which satisfies $\underset{i}{\cup} \varphi_i (Y_i) = Z $ and $\underset{i}{\cup} \varphi_i (M_i) = N.$
\end{dfn}

\begin{rmk}
For  morphisms $\varphi_i : (Y_i,M_i) \rightarrow (Z,N)$ $(i=1,2)$ in ${\rm Cat}({\overline{X}}_{{\rm \acute{e}t}})$, the fiber product of them is defined by  $(Y_1 \underset{Z}{\times} Y_2,M_3)$, where $Y_1 \underset{Z}{\times} Y_2$ is the fiber product in the category of schemes and $M_3$ is the set consisting of points of $(Y_1 \underset{Z}{\times} Y_2)_{\infty}$ whose images are in $M_i$ under the projections $(Y_1 \underset{Z}{\times} Y_2)_{\infty} \rightarrow {Y_i}_{\infty}$ for $i=1,2$. We can check easily $M_3$ is isomorphic to $M_1 \underset{N}{\times} M_2$ in the category of sets.
\end{rmk}

Next, we introduce a Galois category to define the modified \'etale fundamental group.\par
We say $(Y,M)\in {\rm Cat}({\overline{X}}_{{\rm \acute{e}t}})$ is {\it finite \'etale} if $Y \rightarrow X$ is a finite \'etale morphism of schemes over $X$ and $M=Y_{\infty}$.  Then, we denote by ${{{\rm FEt}}}_{\overline{X}}$ the full subcategory of ${\overline{X}}_{{\rm \acute{e}t}}$ whose objects are finite \'etale, and denote by FSets the category of finite sets.\par
In the following, we often abbreviate $(Y,Y_{\infty})$ to $\overline{Y}$ for a scheme $Y$ \'etale over $X$. Let $\overline{K}$ be an algebraic closure of $K$ and let $\tilde{\eta}:{\rm Spec}\ \overline{K}\ \rightarrow X$ be a geometric point. Then we have a functor
\begin{equation}
F_{\tilde{\eta}} : {{\rm FEt}}_{\overline{X}} \rightarrow {\rm FSets}; \  \overline{Y} \mapsto {\rm Hom}_{X} (\tilde{\eta},  Y). \nonumber
\end{equation}

\begin{prop}
${{\rm FEt}}_{\overline{X}}$ is a Galois category with a fiber functor $F_{\tilde{\eta}}$.
\end{prop}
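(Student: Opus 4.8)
The plan is to deduce the statement from the classical fact that the category $\mathrm{FEt}_{X}$ of finite \'etale coverings of $X=\mathrm{Spec}\ \mathcal{O}_{K}$ is a Galois category, with fibre functor $Y\mapsto\mathrm{Hom}_{X}(\tilde\eta,Y)$ and fundamental group $\mathrm{Gal}(\tilde K/K)$, where $\tilde K$ is the maximal extension of $K$ unramified at all finite primes. First I would observe that sending a finite \'etale object $(Y,Y_{\infty})$ to the underlying scheme $Y$ identifies $\mathrm{FEt}_{\overline X}$ with a \emph{full} subcategory $\mathcal{C}$ of $\mathrm{FEt}_{X}$: for a finite \'etale object the extra datum $M=Y_{\infty}$ is forced, the only constraint being that $f_{\infty}\colon Y_{\infty}\to X_{\infty}$ be unramified, and every $X$-morphism between two such objects automatically carries $(Y_{1})_{\infty}$ into $(Y_{2})_{\infty}$. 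Unwinding the definition of ``unramified at an archimedean place'', $\mathcal{C}$ is exactly the full subcategory of those finite \'etale $Y\to X$ in which no complex prime of $Y$ lies over a real prime of $X$, i.e. in which every real place of $K$ splits completely into real places in the associated covering.

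Next I would check that $\mathcal{C}$ is closed inside $\mathrm{FEt}_{X}$ under the operations appearing in the axioms of a Galois category. It contains the terminal object $\overline X$ and the initial object $\emptyset$; it is closed under finite disjoint unions; it is closed under the fibre product over $X$ — this is essentially the content of the Remark on fibre products above, and at a real place $v$ of $K$ it is visible from $(\prod_{i}\mathbb{R})\otimes_{\mathbb{R}}(\prod_{j}\mathbb{R})\cong\prod_{i,j}\mathbb{R}$ at the level of completions; it is closed under passage to a subobject, since a subobject of $\overline Y$ in $\mathrm{FEt}_{\overline X}$ is just a union of connected components of $Y$ equipped with its infinite primes; and it is closed under the quotient $\overline Y/G$ by a finite group $G\subset\mathrm{Aut}_{\overline X}(\overline Y)$, because the completion of $Y$ at a real place $v$ is a finite product of copies of $\mathbb{R}$ on which $G$ acts by permuting the factors, and such a quotient is again a finite product of copies of $\mathbb{R}$. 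These two archimedean local computations are the only point at which the precise meaning of ``unramified at infinity'' is used, and they constitute the real content of the argument.

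Finally I would invoke the general principle that a full subcategory $\mathcal{D}$ of a Galois category $\mathcal{C}_{0}$, equivalent to the category $\pi\text{-}\mathrm{FSets}$ of finite sets with continuous $\pi$-action, which contains a one-point object and is closed under finite products, finite coproducts, subobjects and quotients, is again a Galois category: it is equivalent to $(\pi/N)\text{-}\mathrm{FSets}$ for the closed normal subgroup $N\trianglelefteq\pi$ of elements acting trivially on every object of $\mathcal{D}$, and the restriction of the fibre functor of $\mathcal{C}_{0}$ to $\mathcal{D}$ is a fibre functor. Applying this to $\mathcal{C}\subset\mathrm{FEt}_{X}$ shows that $\mathrm{FEt}_{\overline X}$ is a Galois category and that $F_{\tilde\eta}$ — being precisely the restriction of the fibre functor of $\mathrm{FEt}_{X}$, as it only depends on the scheme $Y$ — is a fibre functor for it. Concretely, the resulting surjection $\pi_{1}(X,\tilde\eta)\twoheadrightarrow\pi_{1}(\overline X)$ has kernel the closed normal subgroup generated by the complex conjugations at the real places, so that $\pi_{1}(\overline X)=\mathrm{Gal}(K^{\{\infty\}}/K)$, with $K^{\{\infty\}}$ the maximal extension of $K$ unramified at all primes, finite and infinite; in particular $\pi_{1}(\overline X)^{\mathrm{ab}}\cong\mathrm{Cl}^{+}_{K}$.

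If one prefers a self-contained route, one may instead verify Grothendieck's axioms (G1)--(G6) directly for $(\mathrm{FEt}_{\overline X},F_{\tilde\eta})$: the Remark above gives fibre products, finite coproducts and quotients by finite automorphism groups are computed as in $\mathrm{FEt}_{X}$ together with the sets of infinite primes (and the archimedean computations above show they remain within $\mathrm{FEt}_{\overline X}$), every morphism of $\mathrm{FEt}_{\overline X}$ factors as a strict epimorphism followed by the inclusion of a direct summand, and $F_{\tilde\eta}$ is exact in the required sense and reflects isomorphisms — the last two points reducing immediately to the corresponding facts for $\mathrm{FEt}_{X}$. I expect the only genuine work in either approach to be the two local statements at the real places in the second paragraph; everything else is bookkeeping carried over from $\mathrm{FEt}_{X}$.
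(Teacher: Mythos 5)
Your proof is correct, and your ``self-contained route'' at the end (verifying Grothendieck's axioms (G1)--(G6) directly, reducing each to the corresponding axiom for ${\rm FEt}_X$) is exactly what the paper does. Your primary route packages the same content more structurally: you identify ${\rm FEt}_{\overline X}$ with the full subcategory of ${\rm FEt}_X$ of covers having no complex place over a real place, check closure under the relevant operations, and invoke the general principle that such a full subcategory of a Galois category is again Galois, equivalent to $(\pi/N)\text{-}{\rm FSets}$. What this buys is a clean isolation of where the real content lies, namely the two archimedean local computations $(\prod_i\mathbb{R})\otimes_{\mathbb{R}}(\prod_j\mathbb{R})\cong\prod_{i,j}\mathbb{R}$ and $(\prod_i\mathbb{R})^G\cong\prod_{\rm orbits}\mathbb{R}$; the paper leaves these implicit in the unproved assertions $\prod_i\overline{Y_i}=\overline{\prod_i Y_i}$ and $\overline{Y}/G=\overline{Y/G}$, so on this point you are actually more careful than the source. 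Two small caveats. First, in your general principle ``closed under quotients'' must mean closed under quotient objects (epimorphic images), whereas you only verify closure under quotients by finite automorphism groups; this does suffice, but it deserves a sentence: using closure under finite products and subobjects one gets $\pi/U^0\in\mathcal{D}$ for the normal core $U^0=\bigcap_g gUg^{-1}$ (a finite intersection), and then any transitive quotient $\pi/V$ with $V\supseteq U$ is the quotient of $\pi/U^0$ by the finite group $V/U^0\subset{\rm Aut}(\pi/U^0)$. Second, your closing aside $\pi_1(\overline X)^{\rm ab}\cong{\rm Cl}^{+}_K$ is backwards: imposing unramifiedness at the infinite places cuts the narrow class field down to the Hilbert class field, so $\pi_1(\overline X)^{\rm ab}\cong{\rm Cl}_K$ (this is the paper's Proposition 2.1.6), while it is $\pi_1(X)^{\rm ab}$ that is isomorphic to ${\rm Cl}^{+}_K$. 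Neither caveat affects the validity of your proof of the proposition itself.
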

\begin{proof}
We check the six axioms (G1)～(G6) of Galois categories for ${{\rm FEt}}_{\overline{X}}$ and $F_{\tilde{\eta}}$ ([SGA I ; V.4]). The fact that the category of schemes finite \'etale over $X$, denoted by ${{\rm FEt}}_{X}$, is a Galois category with a fiber functor $F'_{\tilde{\eta}} : {{\rm FEt}}_{X} \rightarrow {\rm FSets} $ $Y \mapsto {\rm Hom}_X (\tilde{\eta},  Y)$  is well-known ([SGA I ; V.7]). So we may admit the axioms (G1)～(G6) for ${{\rm FEt}}_{X}$ and $F'_{\tilde{\eta}}$. Let us verify (G1)～(G6) for ${{\rm FEt}}_{\overline{X}}$ and $F_{\tilde{\eta}}$.\\
(G1): ${\rm FEt}_{\overline{X}}$ has a final object $(id:X \rightarrow X,  X_{\infty})$. For $\overline{Y_i} \in {\rm FEt}_{\overline{X}} \  (i=1,2,\cdots,m)$, There exist ${\displaystyle \prod_{i} Y_i \in {\rm FEt}_{X}}$ and one can see ${\displaystyle \prod_{i} \overline{Y_i} = \overline{\prod_{i} Y_i}}$ by checking the universal property of fiber products.\\
(G2): ${\rm FEt}_{\overline{X}}$ has an initial object $({\rm Spec}\ 0,({\rm Spec}\ 0)_{\infty})=(\emptyset,\emptyset)$. By similar way from (G1), one can check that ${\rm FEt}_{\overline{X}}$ has finite direct sum. For $\overline{Y} \in  \ {\rm FEt}_{\overline{X}}$ and a finite subgroup $G \subset {\rm Aut}_{\overline{X}} (\overline{Y})$, we can see ${\rm Aut}_{\overline{X}} (\overline{Y})={\rm Aut}_X (Y)$ by the definition of morphisms of ${\rm Cat}({\overline{X}}_{{\rm \acute{e}t}})$. So there is a quotient of $Y \rightarrow X \in  \ {\rm FEt}_{X}$ by $G \subset {\rm Aut}_{X} (Y)$ and then one can check $\overline{Y}/G=\overline{Y/G}.$\\
(G3): For any morphism $\overline{Y_1} \rightarrow \overline{Y_2}$ in ${\rm FEt}_{\overline{X}}$, $Y_1 \rightarrow Y_2$ factors\\ $Y_1 \xrightarrow{f} Y' \xrightarrow{g} Y' \bigsqcup Y'' \cong Y_2$ in ${\rm FEt}_{X}$, where $f$ is a strict epimorphism and $g$ is a monomorphism. This sequence induces $\overline{Y_1} \xrightarrow{f} \overline{Y'} \xrightarrow{g} \overline{Y'} \bigsqcup \overline{Y''} \cong \overline{Y_2}$.\\
(G4) and (G5) are obvious because $F_{\tilde{\eta}} (\overline{Y})=F'_{\tilde{\eta}}$ and $F'_{\eta}$ is a fiber functor of ${\rm FEt}_{X}$.\\
(G6): If $\overline{Y_1}\rightarrow \overline{Y_2}$ is isomorphism, then $F_{\tilde{\eta}} (\overline{Y_1})=F'_{\tilde{\eta}} (Y_1) \rightarrow F'_{\tilde{\eta}} (Y_2)=F_{\tilde{\eta}} (\overline{Y_2})$ is isomorphism. Conversely, if $F_{\tilde{\eta}} (\overline{Y_1}) \rightarrow F_{\tilde{\eta}} (\overline{Y_2})$ is an isomorphism, then $Y_1 \rightarrow Y_2$ is an isomorphism and that induces $\overline{Y_1} \cong \overline{Y_2}$.
\end{proof}
Now we move to define the modified \'etale fundamental group.
\begin{dfn}
The {\it modified \'etale fundamental group} with geometric basepoint $\tilde{\eta}$, denoted by $\pi_{1} (\overline{X})=\pi_1(\overline{X},\tilde{\eta})$, is defined by the fundamental group of the Galois category ${\rm FEt}_{\overline{X}}$ associated to the fiber functor $F_{\tilde{\eta}}$, namely the group of automorphisms of $F_{\tilde{\eta}}.$
\end{dfn}
By the main theorem of Galois categories, we have the following.
\begin{thm}
There is an equivalence of categories between ${\rm FEt}_{\overline{X}}$ and the category of finite discrete sets equipped with continuous left action by $\pi_{1} (\overline{X})$.
\end{thm}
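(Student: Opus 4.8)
The plan is to obtain this as a direct instance of the fundamental theorem of Galois categories, requiring no input beyond the preceding Proposition. By that Proposition the pair $({\rm FEt}_{\overline{X}},\,F_{\tilde{\eta}})$ is a Galois category in the sense of [SGA I; V.4], and by the Definition above $\pi_1(\overline{X})$ is by construction its fundamental group ${\rm Aut}(F_{\tilde{\eta}})$. The general structure theorem [SGA I; V.4] asserts that for any Galois category $\mathcal{C}$ with fibre functor $F$, the group $\pi:={\rm Aut}(F)$ carries a canonical profinite topology and that $F$, regarded as a functor to the category of finite sets equipped with continuous left $\pi$-action, is an equivalence of categories. Specializing to $\mathcal{C}={\rm FEt}_{\overline{X}}$, $F=F_{\tilde{\eta}}$ and $\pi=\pi_1(\overline{X})$ yields the asserted equivalence, realized explicitly by $\overline{Y}\mapsto F_{\tilde{\eta}}(\overline{Y})$ together with its tautological $\pi_1(\overline{X})$-action.

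In more detail, I would spell the argument out as follows. First, $F_{\tilde{\eta}}$ really does take values in finite sets: as noted in the proof of the Proposition, $F_{\tilde{\eta}}(\overline{Y})={\rm Hom}_X(\tilde{\eta},Y)=F'_{\tilde{\eta}}(Y)$, which is finite since $Y\to X$ is finite \'etale. Second, ${\rm FEt}_{\overline{X}}$ contains a cofinal filtered system of connected Galois objects $\overline{Y}$; for each such $\overline{Y}$ the group $\pi_1(\overline{X})$ acts on the finite set $F_{\tilde{\eta}}(\overline{Y})$, and one topologizes $\pi_1(\overline{X})$ by declaring the kernels $N_{\overline{Y}}$ of these actions to be a fundamental system of open neighbourhoods of the identity, so that $\pi_1(\overline{X})=\varprojlim_{\overline{Y}}\pi_1(\overline{X})/N_{\overline{Y}}$ is profinite and acts continuously on every $F_{\tilde{\eta}}(\overline{Y})$. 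Full faithfulness of $\overline{Y}\mapsto F_{\tilde{\eta}}(\overline{Y})$ is reduced, via (G1)--(G3), to the case of a connected source, where a morphism is pinned down by the image of a single geometric point; essential surjectivity follows by writing a finite continuous $\pi_1(\overline{X})$-set $S$ as a quotient of $(\pi_1(\overline{X})/N_{\overline{Y}})\times S$ for a suitable Galois $\overline{Y}$ and building the corresponding object inside ${\rm FEt}_{\overline{X}}$ from finite coproducts of $\overline{Y}$ and quotients by finite automorphism groups, i.e. from (G1) and (G2). All of this is precisely the content of [SGA I; V.4], so in the write-up it suffices to invoke that theorem.

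The main point is that there is essentially no obstacle: the genuine work --- verifying the six axioms --- was carried out in the preceding Proposition, and the present statement is then formal. The one thing worth flagging, more as orientation than as a difficulty, is that in the modified setting a connected Galois object of ${\rm FEt}_{\overline{X}}$ corresponds to a finite extension $L/K$ unramified at \emph{every} prime of $K$, archimedean ones included, so that $\pi_1(\overline{X})$ is the Galois group of the maximal everywhere-unramified extension and is in general a proper quotient of the usual \'etale fundamental group $\pi_1(X)$; the abstract theorem nonetheless applies word for word, and it is this concrete description of $\pi_1(\overline{X})$ as a profinite group, together with the equivalence just proved, that is needed in Section~2.2 to construct the modified Hochschild--Serre spectral sequence.
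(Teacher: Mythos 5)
Your proposal is correct and matches the paper exactly: the paper derives this theorem as an immediate consequence of the main theorem of Galois categories, applied to the fact (established in the preceding Proposition) that ${\rm FEt}_{\overline{X}}$ with the fiber functor $F_{\tilde{\eta}}$ is a Galois category. The additional detail you supply on the profinite topology, full faithfulness, and essential surjectivity is standard content of [SGA I; V.4] and goes beyond what the paper writes down, but the argument is the same.
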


Next, to describe $\pi_{1} (\overline{X})$ more explicitly, we see which object is Galois in the Galois category ${\rm FEt}_{\overline{X}}$. By definition of a connected object and a Galois object in a Galois category, one can see that $\overline{Y} \in  \ {\rm FEt}_{\overline{X}}$ is connected in ${\rm FEt}_{\overline{X}}$ iff $Y \rightarrow X$ is connected in ${\rm FEt}_{X}$, and that a connected object $\overline{Y}$ is Galois in ${\rm FEt}_{\overline{X}}$ iff ${\rm Aut}_{\overline{X}} (\overline{Y}) = {\rm Aut}_{X }(Y) \rightarrow F'_{\tilde{\eta}} (Y) = F_{\tilde{\eta}} (\overline{Y}) $ is bijective, i.e, Galois in ${\rm FEt}_{X}$.  So we have the following Proposition. Let $\tilde{K}$ (resp. ${\tilde{K}}^{ab}$) be the maximal Galois (resp. abelian) extension of $K$ which is unramified over all finite and infinite primes.

\begin{prop}
Notations being as above, we have
\begin{equation}
\pi_{1}(\overline{X})={\rm Gal}({\tilde{K}}/K). \nonumber
\end{equation}
For the abelianization $\pi^{ab}_{1}(\overline{X})$ of $\pi_{1}(\overline{X})$, we have
\begin{equation}
\pi^{ab}_{1}(\overline{X})={\rm Gal}({\tilde{K}}^{ab}/K) \cong {\rm Cl}_K. \nonumber
\end{equation}
\end{prop}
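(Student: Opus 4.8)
The plan is to read off $\pi_1(\overline{X})$ from the connected Galois objects of the Galois category $\mathrm{FEt}_{\overline{X}}$, using the classical dictionary between finite \'etale covers of $\mathrm{Spec}\,\mathcal{O}_K$ and everywhere-unramified extensions of $K$. Recall from the discussion preceding the statement that an object $\overline{Y}\in\mathrm{FEt}_{\overline{X}}$ is connected (resp.\ Galois) exactly when $Y\to X$ is connected (resp.\ Galois) in $\mathrm{FEt}_X$, so by the main theorem of Galois categories one has $\pi_1(\overline{X})=\varprojlim_{\overline{Y}}\mathrm{Aut}_{\overline{X}}(\overline{Y})$, the limit being taken over connected Galois $\overline{Y}$. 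Thus the first step is simply to make the set of such $\overline{Y}$ explicit in number-theoretic terms.

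First I would unwind the defining conditions of $\mathrm{FEt}_{\overline{X}}$. Since $X=\mathrm{Spec}\,\mathcal{O}_K$ is the spectrum of a Dedekind domain, any finite \'etale $Y\to X$ is a finite disjoint union of $\mathrm{Spec}\,\mathcal{O}_{L_i}$ for finite extensions $L_i/K$, and connectedness reduces us to a single $L/K$ with $Y=\mathrm{Spec}\,\mathcal{O}_L$; here finite \'etaleness of $Y\to X$ is equivalent to $L/K$ being unramified at every finite prime of $K$. The corresponding object of $\mathrm{FEt}_{\overline{X}}$ is the pair $(Y,Y_\infty)$, and the requirement that $f_\infty\colon Y_\infty\to X_\infty$ be unramified says, by the definition of ``unramified at $y_\infty$'' recalled above, that no real prime of $K$ has a complex prime of $L$ lying over it, i.e.\ that $L/K$ is unramified also at the archimedean primes. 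Hence the connected Galois objects of $\mathrm{FEt}_{\overline{X}}$ are precisely the $\mathrm{Spec}\,\mathcal{O}_L$ with $L/K$ finite Galois and unramified at all primes, finite and infinite. Taking the limit over these $L$ gives $\tilde K=\bigcup_L L$ and $\pi_1(\overline{X})=\varprojlim_L\mathrm{Gal}(L/K)=\mathrm{Gal}(\tilde K/K)$, the base point $\tilde\eta$ pinning down the embedding $\tilde K\hookrightarrow\overline K$.

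For the abelianization, the maximal pro-abelian quotient of $\pi_1(\overline{X})=\mathrm{Gal}(\tilde K/K)$ corresponds to the maximal abelian subextension of $\tilde K/K$, which is exactly $\tilde K^{ab}$, the maximal abelian extension of $K$ unramified at all finite and infinite primes, i.e.\ the (wide) Hilbert class field $H_K=\tilde K^{ab}$ of $K$. Then I would invoke unramified class field theory: the Artin reciprocity map induces an isomorphism $\mathrm{Gal}(H_K/K)\xrightarrow{\sim}\mathrm{Cl}_K$, $\mathfrak p\mapsto\bigl(\tfrac{H_K/K}{\mathfrak p}\bigr)$, whence $\pi_1^{ab}(\overline{X})=\mathrm{Gal}(\tilde K^{ab}/K)\cong\mathrm{Cl}_K$. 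Note that it is the wide class group, not the narrow one ${\rm Cl}_K^{+}$, that appears, precisely because we have also imposed unramifiedness at the real places; this is the point at which the modification of the site is essential.

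The only genuinely delicate step is the bookkeeping in the translation ``$f_\infty$ unramified $\Longleftrightarrow$ $L/K$ unramified at the archimedean primes'': one must match the site-theoretic notion (a point $\mathrm{Spec}\,\mathbb C\to Y$ factoring through $\mathrm{Spec}\,\mathbb R$, together with the real/complex dichotomy for pairs of conjugate points) with the classical statement that a real place of $K$ either splits into real places of $L$ or ramifies into complex ones, and to check that this condition is compatible with disjoint unions and fiber products, so that it genuinely cuts out a full subcategory closed under the operations used to build Galois objects. Everything else is a direct appeal to the general theory of Galois categories and to classical class field theory, so I expect no further obstacle.
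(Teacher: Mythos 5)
Your proposal is correct and follows essentially the same route as the paper: the paper's (terse) proof rests on exactly the identification you make — connected Galois objects of ${\rm FEt}_{\overline{X}}$ are the ${\rm Spec}\ {\cal O}_L$ with $L/K$ Galois and unramified at all finite and infinite primes (the latter coming from the condition $M=Y_\infty$ with $f_\infty$ unramified, i.e.\ no complex prime of $L$ over a real prime of $K$) — followed by the inverse limit and Artin reciprocity for the abelianization. Your additional bookkeeping on the archimedean condition and the remark that one gets the wide rather than the narrow class group are consistent with, and slightly more explicit than, what the paper records.
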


\begin{proof}
The first assertion follows from the definition of $\pi_{1}(\overline{X})$ given above. The second assertion follows from the Artin reciprocity isomorphism 
\begin{center}
${\rm Cl}_K \stackrel{\sim}{\rightarrow} {\rm Gal}({\tilde{K}}^{ab}/K)$ ; $ [\mathfrak{a}] \mapsto \left( \frac{{\tilde{K}}^{ab}/K}{\mathfrak{a}} \right)$.
\end{center}
\end{proof}

\subsection{The Artin-Verdier topos and the modified \'etale cohomology groups}
\ \ \ Let Sh$({\overline{X}}_{{\rm \acute{e}t}})$ be the category of abelian sheaves on the site ${\overline{X}}_{{\rm \acute{e}t}}$, called the {\it Artin-Verdier topos}. Firstly, we recall the decomposition lemma for Sh$({\overline{X}}_{{\rm \acute{e}t}})$ following to [AC] and [B]. We fix an algebraic closure $\overline{K}$ of $K$. For $x\in X_{\infty}$, we fix an extension $\overline{x}$ of $x$ to $\overline{K}$ and denote by $I_{\overline{x}}$ the inertia group of $\overline{x}$.  We have $I_{\overline{x}} \cong \mathbb{Z}/2\mathbb{Z}$  for a real prime $x$ and $I_{\overline{x}}$ is trivial for a complex prime $x$. Let $\eta : {\rm Spec}\ K \rightarrow X$ be the generic point. Then, for $F \in {\rm Sh}({\overline{X}}_{{\rm \acute{e}t}})$, we can regard $\eta^{*} F=F_{\eta}$ as a Gal$(\overline{K}/K)$-module and $I_{\overline{x}} \subset {\rm Gal}(\tilde{K}/K)$ acts on $\eta^{*} F$. We define the site, denoted by $TX_{\infty}$, as follows. An object in $TX_{\infty}$ is a pair $(M,m)$ where $M$ is a finite set and $m:M\rightarrow X_{\infty}$ is a map.  A morphism $(M_1,m_1) \rightarrow (M_2,m_2)$ in $TX_{\infty}$ is a map $f : M_1 \rightarrow M_2$ such that $m_2=f \circ m_1$.  A covering in $TX_{\infty}$ is a family of morphisms $\{ \varphi_i :(M_i,m_i) \rightarrow (M,m) \}_{i \in I}$ in $TX_{\infty}$ such that $m_i$ is surjective and $M=\underset{i}{\cup} \varphi(M_i)$. Then, we can easily identify a sheaf $G$ on $TX_{\infty}$ with a family of abelian groups $ \{G_{x} \}_{x \in {X_{\infty}}} $. We define the maps of sites $p : {\overline{X}}_{{\rm \acute{e}t}} \rightarrow TX_{\infty}$ and $q : {\overline{X}}_{{\rm \acute{e}t}} \rightarrow X_{{\rm \acute{e}t}}$ by the forgetful functors. Then we have the following functors
$${\rm Sh} (TX_{\infty}) \overset{p_{*}}{\underset{p^{*}}{\leftrightarrows}}  {\rm Sh} (\overline{X}_{{\rm \acute{e}t}})  \overset{q_{*}}{\underset{q^{*}}{\leftrightarrows}}  {\rm Sh}(X_{{\rm \acute{e}t}}).$$
Next, we define the category   ${\rm Sh}({\overline{X}}_{{\rm \acute{e}t}})'$ as follows. An object in ${\rm Sh}({\overline{X}}_{{\rm \acute{e}t}})'$ is a triple $(\{G_x\}_{x \in X_{\infty}} , F,  \{ \sigma_{x} : G_{x} \rightarrow (\eta^{*} F)^{I_{x}} \}_{x \in X_{\infty}})$, where $\{G_x\}_{x \in X_{\infty}} \in  {\rm Sh} (TX_{\infty})$, $F \in  {\rm Sh}(X_{{\rm \acute{e}t}})$
 and $\{ \sigma_{x} : G_{x} \rightarrow (\eta^{*} F)^{I_{x}} \}_{x \in X_{\infty}} $ is a family of homomorphisms of abelian groups. A morphism $(\{ G_{x} \},  F, \{ \sigma_{x} \} ) \rightarrow (\{ G'_{x} \},  F', \{ \sigma'_{x} \} )$ is a pair of morphisms $ \{ G_{x} \}  \rightarrow \{ G'_{x} \} $,  $F \rightarrow F'$ such that the induced diagram
\begin{center}
\[
  \begin{CD}
       G_{x}  @>{\scriptstyle{\sigma_{x}}}>>  (\eta^{*} F)^{I_{x}}     \\
         @VV{p_1}V  @VV{p_2}V      \\
       G'_{x} @>{\scriptstyle{\sigma'_{x}}}>>  (\eta^{*} F')^{I_{x}},
  \end{CD}
\]
\end{center}
is commutative for each $x \in X_{\infty}$.\par
Now we describe the statement of the decomposition lemma for Sh$({\overline{X}}_{{\rm \acute{e}t}})$.
\begin{lem}[{{\rm [AC: Proposition 2.3], [B; Proposition 1.2]}}]  There is an equivalence of categories given by the following functors.
$${\rm Sh} ({\overline{X}}_{{\rm \acute{e}t}}) \overset{\Phi}{\underset{\Psi}{\leftrightarrows}}  {\rm Sh} ({\overline{X}}_{{\rm \acute{e}t}})',$$ where $\Phi$ and $\Psi$ are defined by 
$$\Phi : S \mapsto (q_{*} S,  p_{*} S,  p_{*} S \rightarrow p_{*} q^{*} q_{*} S ),\ \Psi : (\{ G_{x} \},  F, \{ \sigma_{x} \}) \mapsto q^{*} F \times_{p^{*} q_{*} q^{*} F} p^{*} \{ G_{x} \} .$$
\end{lem}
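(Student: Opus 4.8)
The statement is the recollement (``gluing of topoi'') description of the Artin--Verdier topos, presenting ${\rm Sh}(\overline{X}_{{\rm \acute{e}t}})$ as the category of triples $(\{G_{x}\},F,\{\sigma_{x}\})$ glued along the ``nearby sections'' functor $F\mapsto\{(\eta^{*}F)^{I_{x}}\}_{x\in X_\infty}$. My plan is to describe $\Phi$ and $\Psi$ explicitly on objects and then to produce natural isomorphisms $\Psi\circ\Phi\cong{\rm id}$ and $\Phi\circ\Psi\cong{\rm id}$; functoriality of each construction is then a formality. One could instead deduce the statement from the general equivalence for a glued topos after exhibiting $X$ as an ``open'' and $X_\infty$ as a ``closed'' part of $\overline{X}$, but the hands-on approach of [AC] and [B] is more transparent here and I would follow it.

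First I would make the functors $q_{*},q^{*},p_{*},p^{*}$ explicit and record their adjunctions: $q_{*}$ restricts a sheaf to the schematic \'etale site $X_{{\rm \acute{e}t}}$, $q^{*}$ is the pullback $F\mapsto((Y,M)\mapsto F(Y))$ forgetting the archimedean datum, while the functors relating $\overline{X}_{{\rm \acute{e}t}}$ and $TX_\infty$ read off the family of stalks $\{S_{x}\}_{x\in X_\infty}$ of $S$ and, in the other direction, produce the skyscraper sheaf supported on the infinite primes, whose value on $(Y,M)$ is $\prod_{y\in M}G_{f_\infty(y)}$. The key local point is the stalk of $q^{*}F$ at an infinite prime $x$: an \'etale neighborhood of $x$ in $\overline{X}_{{\rm \acute{e}t}}$ is an object $(Y,M)$ equipped with a real (resp. complex) prime $y\in M$ over $x$ at which $f_\infty$ is unramified, and the colimit over such neighborhoods gives $(q^{*}F)_{x}\cong(\eta^{*}F)^{I_{x}}$. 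Thus the target of the third datum of $\Phi(S)$ is canonically $\{(\eta^{*}q_{*}S)^{I_{x}}\}_{x}$, and that datum is just the restriction to the infinite primes of the canonical morphism $S\to q^{*}q_{*}S$, so $\Phi(S)$ lies in ${\rm Sh}(\overline{X}_{{\rm \acute{e}t}})'$. Conversely, for a triple $(\{G_{x}\},F,\{\sigma_{x}\})$ the fibre product of $q^{*}F$ with the skyscraper sheaf built from $\{G_{x}\}$, taken over the skyscraper $\{(\eta^{*}F)^{I_{x}}\}_{x}$ via $\{\sigma_{x}\}$ on one side and the ``germ at the boundary points'' map $s\mapsto({\rm germ}_{y}s)_{y\in M}$ on the other, is an abelian sheaf on $\overline{X}_{{\rm \acute{e}t}}$ whose sections over $(Y,M)$ are the pairs $(s,(t_{y})_{y\in M})$ with $s\in F(Y)$, $t_{y}\in G_{f_\infty(y)}$ and $\sigma_{f_\infty(y)}(t_{y})={\rm germ}_{y}(s)$ in $(\eta^{*}F)^{I_{f_\infty(y)}}$; checking the sheaf axiom uses the description of fibre products of objects $(Y,M)$ in the Remark following Definition~2.1.1. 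This is $\Psi$.

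The heart of the argument is the two natural isomorphisms. For $\Psi\circ\Phi\cong{\rm id}$, given $S$ I would apply the sheaf axiom to the covering of $(Y,M)$ by $(Y,\emptyset)$ together with the $(Y,\{y\})$, $y\in M$ (a covering whose pairwise fibre products over $(Y,M)$ of distinct members are all $(Y,\emptyset)$), which exhibits $S(Y,M)$ as the set consisting of a section of the schematic restriction $q_{*}S$ over $Y$ together with, for each $y\in M$, a compatible lift in the stalk $S_{x}$, $x=f_\infty(y)$, inducing its germ; this is exactly $(\Psi\Phi S)(Y,M)$, naturally in $(Y,M)$. For $\Phi\circ\Psi\cong{\rm id}$ I would read the schematic restriction, the infinite-prime stalks, and the structure maps of $\Psi(\{G_{x}\},F,\{\sigma_{x}\})$ off the fibre-product formula and check that they give back $F$, $\{G_{x}\}$ and $\{\sigma_{x}\}$; the only delicate point is that the schematic restriction of $\Psi(\dots)$ is $F$ itself, which holds because $q_{*}$ annihilates skyscraper sheaves supported on the infinite primes and $q_{*}q^{*}F\cong F$.

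The step I expect to be the main obstacle is the local analysis at the real primes, already needed to make $\Phi$ well defined: describing the cofiltered category of \'etale neighborhoods of a real prime, hence the stalk functor there, and checking that the germ map genuinely lands in the inertia invariants $(\eta^{*}F)^{I_{x}}$ compatibly with base change and with fibre products of neighborhoods. Once the stalk identification $(q^{*}F)_{x}\cong(\eta^{*}F)^{I_{x}}$ and the sheaf-theoretic reconstruction of $S(Y,M)$ are secured, what remains is bookkeeping; and in any event the corresponding statement is proved in [AC] and [B], whose arguments transpose directly.
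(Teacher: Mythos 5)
Your plan is correct in outline but follows a genuinely different route from the paper. The paper does not construct the natural isomorphisms $\Psi\circ\Phi\cong\mathrm{id}$ and $\Phi\circ\Psi\cong\mathrm{id}$ at all: it invokes the general gluing result [Ar; Proposition 2.4] and reduces the lemma to four formal conditions on the functors $p_{*},p^{*},q_{*},q^{*}$ --- the adjunctions, exactness of the direct images, full faithfulness of the inverse images, and the characterization of the sheaves $S$ with $q_{*}S=0$ as exactly those of the form $p^{*}G$ --- of which (1), (3) and (4) are cited from [Z; Proposition 1.3.3] and (2) is checked in one line (the three sites have final objects and finite fibre products, and $p$, $q$ preserve them). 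This is exactly the ``recollement'' alternative that you mention and then set aside. The trade-off: the paper's argument is short and avoids any stalk analysis at the real primes, but its substance is outsourced to [Ar] and [Z]; your hands-on construction is self-contained and makes the appearance of the inertia invariants $(\eta^{*}F)^{I_{x}}$ visible, whereas the paper's Remark 2.2.4 has to record the explicit description of sections afterwards without derivation. If you do carry out your version, note one point of care in the $\Psi\circ\Phi\cong\mathrm{id}$ step: the sheaf axiom for the covering $\{(Y,\emptyset)\}\cup\{(Y,\{y\})\}_{y\in M}$ by itself only exhibits $S(Y,M)$ as the fibre product of the groups $S(Y,\{y\})$ over $S(Y,\emptyset)$; identifying each $S(Y,\{y\})$ with $S(Y,\emptyset)\times_{(\eta^{*}q_{*}S)^{I_{x}}}S_{x}$ still requires passing to the colimit over shrinking neighbourhoods of $y$, i.e.\ precisely the local analysis at real primes that you flag as the main obstacle --- make sure that analysis is actually performed there, and not only for the well-definedness of $\Phi$.
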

\begin{proof}
To apply the result of [Ar; Proposition 2.4], we check the following (1), (2), (3) and (4).\\
(1) $q_{*}$(resp. $p_{*}$) is left adjoint to $q^{*}$ (resp. $p^{*}$).\\
(2) $q_{*}$,  $p_{*}$ are exact.\\
(3) $p^{*}$,  $q^{*}$ are fully faithful.\\
(4) For any $S \in {\rm Sh} ({\overline{X}}_{{\rm \acute{e}t}})$,  $q_{*} S = 0$ holds iff there exists $G \in {\rm Sh} (TX_{\infty})$ such that $S=p^{*} G$.\\
For (1), (3) and (4), see [Z; Proposition 1.3.3]. (2) follows from the fact that ${\overline{X}}_{{\rm \acute{e}t}}$, $X_{{\rm \acute{e}t}}$ and $TX_{\infty}$ have a final object and finite fiber products,  and  $p$, $q$  preserve them.
\end{proof}

\begin{rmk}
(1) Through the category equivalence, we can identify $p_{*}$ (resp. $p^{*}$,  $q_{*}$,  $q^{*}$ ) with the following functors$\psi_{*}$ (resp. $\psi^{*}$,  $\phi^{*}$,  $\phi_{*}$).
\begin{center}
$\phi^{*} (\{ G_{x} \},  F,  \{ \sigma_{x} \} ) = F $,  $\phi_{*} F = (\{ (\eta^{*} F)^{I_{x}}  \},  F,  \{ {\mathrm id} \}  )$ \\ $\psi^{*} (\{ G_{x} \},  F,  \{ \sigma_{x} \} ) = \{ G_{x} \} $,  $\psi_{*} \{ G_{x} \} = (\{ G_{x} \},  0,  \{ 0 \}).$
\end{center}
(2) If we denote by ${\underline{A}}_{{\overline{X}}_{{\rm \acute{e}t}}}$ the constant sheaf on ${\overline{X}}_{{\rm \acute{e}t}}$ associated to an abelian group $A$, then one can see ${\underline{A}}_{{\overline{X}}_{{\rm \acute{e}t}}} = \phi_{*} ({\underline{A}}_{X_{{\rm \acute{e}t}}})$. In the following, if there is no confusion, we will abbreviate ${\underline{A}}_{{\overline{X}}_{{\rm \acute{e}t}}}$ to $A$.\\
(3) For $S = (\{ G_{x} \},  F,  \{ \sigma_{x} \} ) \in {\rm Ob Sh}({\overline{X}}_{{\rm \acute{e}t}}) $,  the section of $S$ at $(Y,M) \in {\overline{X}}_{{\rm \acute{e}t}} $, $\Gamma ((Y,M),S)$, is given by $F(Y) \times_{\eta^{*} F} G_{x_{1}} \times_{\eta^{*} F} G_{x_{2}} \times_{\eta^{*} F} \cdots \times_{\eta^{*} F} G_{x_{r}}$, where $ \{ x_{1}, x_{2},  \cdots,  x_{r} \}$ is the image of $M$ by $Y_{\infty} \rightarrow X_{\infty}$.
\end{rmk}

\begin{dfn}
For $S \in {\rm Sh} ({\overline{X}}_{{\rm \acute{e}t}})$, the cohomology group ${\rm H}^i(\overline{X},  S)$ is called the  $i$-th {\it modified \'etale cohomology group} of $\overline{X}$ with values in $S$.
\end{dfn}
When $S$ is the constant sheaf $\mathbb{Z}/ n\mathbb{Z}$,  ${\rm H}^i(\overline{X},  S)$ is calculated in [B; Proposition 2.13] and [AC; Corollary 2.15].  We firstly recall the Artin-Verdier duality.
\begin{prop}[{The Artin-Verdier Duality ([B; Theorem 5.1])}]
Let $F$ be a constructible sheaf on $X={\rm Spec}\ {\cal O}_K$.  We fix an algebraic closure $\overline{K}$ of $K$, and for $x \in X_{\infty}$, we fix an extension $\overline{x}$ of $x$ to $\overline{K}$. Let $\eta : {\rm Spec}\ K \rightarrow X$ be the generic point. Let $G_{m,X}$ be the \'etale sheaf of units on $X$ Then we have the followings.\\\\
{\rm (a)} ${\rm H}^i({\overline{X}},\phi_{*} F) = {\rm Ext}^i_{{\overline{X}}}(\phi_{*} F, \phi_{*} G_{m,X})=0$ for $i>3.$\\
{\rm (b)} The Yoneda pairing $${\rm H}^i({\overline{X}},\phi_{*} F) \times {\rm Ext}^{3-i}_{{\overline{X}}}(\phi_{*} F, \phi_{*} G_{m,X}) \rightarrow {\rm H}^3({\overline{X}},\phi_{*} G_{m,X}) \cong \mathbb{Q}/\mathbb{Z} $$ is a perfect duality of finite groups for $i\geq2$.\\
{\rm (c)}  If, for any $x \in X_{\infty}$, the inertia group $I_{\overline{x}}$ of $\overline{x}$  acts trivially on the ${\rm Gal}(\overline{K}/K)\mbox{-module}$ $\eta^{*}F=F_{\eta}$, then the pairing in {\rm (b)} is perfect for any $i\geq0$.
\end{prop}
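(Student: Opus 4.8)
The plan is to run the classical dévissage proof of Artin--Verdier duality, adapted to the modified site $\overline{X}$. First I would note that
$$F \longmapsto {\rm H}^{i}(\overline{X}, \phi_{*}F) \qquad\text{and}\qquad F \longmapsto {\rm Ext}^{\,3-i}_{\overline{X}}(\phi_{*}F, \phi_{*}G_{m,X})$$
are both $\delta$-functors on the abelian category of constructible sheaves on $X = {\rm Spec}\,{\cal O}_{K}$ (the second becoming a cohomological $\delta$-functor in $i$ after applying Pontryagin duality), and that the Yoneda cup product, composed with a fixed trace isomorphism ${\rm H}^{3}(\overline{X}, \phi_{*}G_{m,X}) \cong \mathbb{Q}/\mathbb{Z}$, gives a morphism between them. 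By the five lemma applied to the long exact sequences of a short exact sequence of constructible sheaves, it then suffices to establish (a), (b) and (c) for a family of sheaves generating the category under extensions. The natural such family is: the skyscrapers $i_{x*}M$ at closed points $x$ of $X$, and the extensions by zero $j_{!}(F|_{U})$ of sheaves $F$ locally constant on a sufficiently small open $U = {\rm Spec}\,{\cal O}_{K,S} \subset X$; every constructible sheaf is built from these by finitely many extensions.

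For the skyscraper case $F = i_{x*}M$, the group ${\rm H}^{*}(\overline{X}, \phi_{*}i_{x*}M)$ reduces to the Galois cohomology of the finite residue field $\kappa(x)$, which lives only in degrees $0$ and $1$, giving (a); the Ext side is computed from the local-to-global spectral sequence for ${\rm Ext}_{\overline{X}}$ together with local Tate duality at the completion $K_{x}$, and the pairing matches degree $0$ against degree $1$ perfectly and compatibly with the local invariant map at $x$. As $x$ is non-archimedean, this case contributes identically to (b) and to (c). For the locally constant case I would run a Kummer-type dévissage, using multiplication by $n$ and the sheaves $\mu_{n}$ on $U$, to reduce to the two model sheaves $G_{m,X}$ and the constant sheaf $\mathbb{Z}/n\mathbb{Z}$. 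The essential arithmetic input at this point is the computation ${\rm H}^{0}(\overline{X}, \phi_{*}G_{m,X}) = {\cal O}_{K}^{\times}$, ${\rm H}^{1}(\overline{X}, \phi_{*}G_{m,X}) \cong {\rm Cl}_{K}$, ${\rm H}^{2}(\overline{X}, \phi_{*}G_{m,X}) = 0$, and ${\rm H}^{3}(\overline{X}, \phi_{*}G_{m,X}) \cong \mathbb{Q}/\mathbb{Z}$; the last isomorphism is global class field theory, namely the exact sequence $0 \to {\rm Br}(K) \to \bigoplus_{v}{\rm Br}(K_{v}) \to \mathbb{Q}/\mathbb{Z} \to 0$ together with the compatibility of local invariants, and it is precisely here that the real places enter with their $\tfrac{1}{2}\mathbb{Z}/\mathbb{Z}$ contribution, matched by the stalk $\mathbb{R}^{\times}$ of $\phi_{*}G_{m,X}$ and the cohomology of ${\rm Gal}(\mathbb{C}/\mathbb{R})$.

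Granting these inputs, (a) is read off degree by degree, and (b) follows by feeding the $G_{m,X}$ computation and the non-archimedean local dualities into the Poitou--Tate global duality sequence for $X$, corrected at the infinite places by the cohomology of the $I_{x}$-invariants $(\eta^{*}F)^{I_{x}}$: for $i \geq 2$ the real places contribute only through ${\rm H}^{i}({\rm Gal}(\mathbb{C}/\mathbb{R}), -)$ with $i \geq 2$, where passing to invariants and to Tate cohomology agree (up to the relevant degree shift) and the correction terms cancel, so the pairing is perfect in that range. For (c), the hypothesis that each $I_{\bar x}$ acts trivially on $\eta^{*}F$ forces the archimedean stalk of $\phi_{*}F$ to be all of $\eta^{*}F$, so the archimedean local terms become the full, $2$-periodic, self-dual Tate cohomology $\widehat{{\rm H}}^{*}({\rm Gal}(\mathbb{C}/\mathbb{R}), \eta^{*}F)$; this added self-duality is exactly what lets perfectness propagate down through $i = 1$ and $i = 0$, whereas in general the defect at those low degrees is measured by the discrepancy between $\widehat{{\rm H}}^{0}$ and ${\rm H}^{0}$ at the real places.

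The arithmetic heart --- the identification ${\rm H}^{3}(\overline{X}, \phi_{*}G_{m,X}) \cong \mathbb{Q}/\mathbb{Z}$ and local Tate duality --- is classical, so I expect the main obstacle to be the bookkeeping: checking that the Yoneda pairing is compatible with every connecting homomorphism occurring in the two dévissages and with the local-to-global ${\rm Ext}$ spectral sequence, and tracking precisely how the $I_{x}$-invariants built into $\phi_{*}$ modify the archimedean local terms, so that (b) holds from $i = 2$ upward while the triviality hypothesis in (c) pushes perfectness down to all $i \geq 0$. Getting these compatibilities straight, rather than any individual computation, is where the real work lies.
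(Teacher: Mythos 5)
The paper does not prove this proposition at all: it is imported verbatim from Bienenfeld [B, Theorem 5.1] (with Milne's \emph{Arithmetic Duality Theorems}, Ch.~II as the standard parallel reference), so there is no in-paper argument to compare against. Your sketch follows essentially the same route as the cited source --- d\'evissage over skyscrapers and locally constant sheaves, Kummer reduction to $G_{m,X}$ and $\mathbb{Z}/n\mathbb{Z}$, the computation ${\rm H}^3(\overline{X},\phi_*G_{m,X})\cong\mathbb{Q}/\mathbb{Z}$ via the Brauer--Hasse--Noether sequence, and the analysis of the archimedean correction terms for the $i\ge 2$ versus all-$i$ dichotomy --- and your numerology is consistent with the paper's later use of the result (e.g.\ ${\rm H}^1(\overline{X},\phi_*G_{m,X})\cong{\rm Cl}_K$ matching Proposition 2.2.5). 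As a blueprint it is sound; the substantive work you correctly flag as remaining (compatibility of the Yoneda pairing with all connecting maps and with the local terms at real places) is precisely what occupies the bulk of [B].
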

In this paper, we apply the Artin-Verdier Duality for the constant sheaf $F=\mathbb{Z}/n \mathbb{Z}$ on $X$ to obtain the following Proposition 2.2.5.
We denote by $\mu_n(K)$ the group of $n$-th roots of unity in $K$ and we define the groups $Z_1$ and $B_1$ by $Z_{1}=\{ (a, \mathfrak{a}) \in K^{\times} \oplus I_K | {(a)}^{-1} = {\mathfrak{a}}^{n}  \}$,  $B_{1}=\{ (b^n,  {(b)}^{-1}) \in K^{\times} \oplus I_K | b \in K^{\times} \} $.
\begin{prop}[{[B; Proposition 2.13], [AC; Corollary 2.15]}] We have
$${\rm Ext}^{i}_{\overline{X}} (\mathbb{Z} / n \mathbb{Z},  \phi_{*} G_{m,X}) = 
\begin{cases} 
\mu_{n}($K$) & (i=0)\\
Z_{1}/B_{1} & (i=1)\\
{\rm Cl}_K / n & (i=2)\\
\mathbb{Z} / n \mathbb{Z} & (i=3)\\
0 & (i>3),\\
\end{cases}$$
where $G_{m,X}$ is the \'etale sheaf of units on $X$. Then we have, by the Artin-Verdier duality,
$${\rm H}^{i} (\overline{X}, \mathbb{Z} / n \mathbb{Z})= 
\begin{cases} 
\mathbb{Z} / n \mathbb{Z} & (i=0)\\
({\rm Cl}_K / n)^{\sim} & (i=1)\\
(Z_{1}/B_{1})^{\sim} & (i=2)\\
(\mu_{n}($K$))^{\sim} & (i=3)\\
0 & (i>3),\\
\end{cases}$$
where $(-)^{\sim}$ is given by ${\rm Hom}(-,\mathbb{Q}/\mathbb{Z})$.
\end{prop}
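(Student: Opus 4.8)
The plan is as follows. The first display is exactly [B; Proposition 2.13] and [AC; Corollary 2.15], so I would recall their argument; the second display will then fall out of the Artin-Verdier duality (Proposition 2.2.4).

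For the ${\rm Ext}$-groups I would apply ${\rm RHom}_{\overline{X}}(-, \phi_{*} G_{m,X})$ to the short exact sequence of constant sheaves $0 \to \mathbb{Z} \xrightarrow{n} \mathbb{Z} \to \mathbb{Z}/n\mathbb{Z} \to 0$ on ${\overline{X}}_{{\rm \acute{e}t}}$. Since ${\rm Ext}^{i}_{\overline{X}}(\mathbb{Z}, \phi_{*} G_{m,X}) = {\rm H}^{i}(\overline{X}, \phi_{*} G_{m,X})$, the resulting long exact sequence yields ${\rm Ext}^{0}_{\overline{X}}(\mathbb{Z}/n\mathbb{Z}, \phi_{*} G_{m,X}) = {\rm H}^{0}(\overline{X}, \phi_{*} G_{m,X})[n]$ and, for $i \geq 1$, short exact sequences
$$0 \longrightarrow {\rm H}^{i-1}(\overline{X}, \phi_{*} G_{m,X})/n \longrightarrow {\rm Ext}^{i}_{\overline{X}}(\mathbb{Z}/n\mathbb{Z}, \phi_{*} G_{m,X}) \longrightarrow {\rm H}^{i}(\overline{X}, \phi_{*} G_{m,X})[n] \longrightarrow 0 ,$$
so everything reduces to the modified \'etale cohomology of $\mathbb{G}_{m}$ on $\overline{X}$. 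To compute this I would use the decomposition lemma (Lemma 2.2.1) to separate the contribution of $X_{{\rm \acute{e}t}}$ from the local data at the infinite primes recorded by the inertia groups $I_{\overline{x}}$, then combine the classical computation of ${\rm H}^{i}_{{\rm \acute{e}t}}({\rm Spec}\ {\cal O}_{K}, \mathbb{G}_{m})$ with local class field theory at the real primes, obtaining
$${\rm H}^{0}(\overline{X}, \phi_{*} G_{m,X}) = {\cal O}_{K}^{\times}, \quad {\rm H}^{1} = {\rm Cl}_{K}, \quad {\rm H}^{2} = 0, \quad {\rm H}^{3} = \mathbb{Q}/\mathbb{Z}, \quad {\rm H}^{i} = 0 \ \ (i > 3).$$
Feeding this back in (and using that $\mathbb{Q}/\mathbb{Z}$ is divisible) gives $\mu_n(K) = {\cal O}_{K}^{\times}[n]$ for $i=0$, ${\rm Cl}_{K}/n$ for $i=2$, $\mathbb{Z}/n\mathbb{Z} = (\mathbb{Q}/\mathbb{Z})[n]$ for $i=3$, and $0$ for $i>3$. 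The case $i=1$ leaves the extension $0 \to {\cal O}_{K}^{\times}/n \to {\rm Ext}^{1}_{\overline{X}}(\mathbb{Z}/n\mathbb{Z}, \phi_{*} G_{m,X}) \to {\rm Cl}_{K}[n] \to 0$, and I would identify the middle term with $Z_{1}/B_{1}$ via $(a, \mathfrak{a}) \mapsto [\mathfrak{a}]$: this map $Z_{1}/B_{1} \to {\rm Cl}_{K}[n]$ is surjective with kernel ${\cal O}_{K}^{\times}/({\cal O}_{K}^{\times})^{n}$, and one checks it is compatible with the extension above (equivalently, $Z_{1}/B_{1}$ is by construction the first cohomology of a two-term complex computing ${\rm RHom}(\mathbb{Z}/n\mathbb{Z}, \phi_{*} G_{m,X})$).

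For the groups ${\rm H}^{i}(\overline{X}, \mathbb{Z}/n\mathbb{Z})$ I would apply Proposition 2.2.4 to the constant sheaf $F = \mathbb{Z}/n\mathbb{Z}$ on $X$. By Remark 2.2.2(2), ${\underline{\mathbb{Z}/n\mathbb{Z}}}_{{\overline{X}}_{{\rm \acute{e}t}}} = \phi_{*}({\underline{\mathbb{Z}/n\mathbb{Z}}}_{X_{{\rm \acute{e}t}}})$, so ${\rm H}^{i}(\overline{X}, \mathbb{Z}/n\mathbb{Z}) = {\rm H}^{i}(\overline{X}, \phi_{*} F)$, which vanishes for $i>3$ by part (a). Moreover $\eta^{*} F = \mathbb{Z}/n\mathbb{Z}$ carries the trivial ${\rm Gal}(\overline{K}/K)$-action, hence the trivial $I_{\overline{x}}$-action for every $x \in X_{\infty}$, so the hypothesis of part (c) is met and the Yoneda pairing
$${\rm H}^{i}(\overline{X}, \mathbb{Z}/n\mathbb{Z}) \times {\rm Ext}^{3-i}_{\overline{X}}(\mathbb{Z}/n\mathbb{Z}, \phi_{*} G_{m,X}) \longrightarrow {\rm H}^{3}(\overline{X}, \phi_{*} G_{m,X}) \cong \mathbb{Q}/\mathbb{Z}$$
is a perfect pairing of finite groups for all $i \geq 0$. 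Hence ${\rm H}^{i}(\overline{X}, \mathbb{Z}/n\mathbb{Z}) \cong {\rm Ext}^{3-i}_{\overline{X}}(\mathbb{Z}/n\mathbb{Z}, \phi_{*} G_{m,X})^{\sim}$, and substituting the first display yields $\mathbb{Z}/n\mathbb{Z}$, $({\rm Cl}_{K}/n)^{\sim}$, $(Z_{1}/B_{1})^{\sim}$, $(\mu_n(K))^{\sim}$ for $i = 0,1,2,3$ and $0$ for $i>3$.

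I expect the main obstacle to be the computation of the $\mathbb{G}_{m}$-cohomology of $\overline{X}$ used above: this is the only place where the real primes genuinely interfere, and it is precisely why one works on the Artin-Verdier site rather than on $({\rm Spec}\ {\cal O}_{K})_{{\rm \acute{e}t}}$ — the modification is engineered so that ${\rm H}^{2}$ vanishes and ${\rm H}^{3} \cong \mathbb{Q}/\mathbb{Z}$ even when $K$ has a real prime. Once that input and the identification of the $i=1$ extension with $Z_{1}/B_{1}$ are in hand, the long exact ${\rm Ext}$-sequence and the duality step are routine.
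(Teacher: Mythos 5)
The paper offers no proof of this proposition at all --- it is imported verbatim from [B; Proposition 2.13] and [AC; Corollary 2.15] --- and your reconstruction follows exactly the argument of those references: the long exact ${\rm Ext}$-sequence induced by $0\to\mathbb{Z}\xrightarrow{\times n}\mathbb{Z}\to\mathbb{Z}/n\mathbb{Z}\to 0$, the computation of ${\rm H}^{\bullet}(\overline{X},\phi_{*}G_{m,X})$, and the duality of Proposition 2.2.4 applied to the constant sheaf. Your outline is correct; the one step you explicitly defer, namely identifying the middle term of the $i=1$ extension with $Z_{1}/B_{1}$ (which requires the explicit cocycle computation of [AC] rather than merely matching the sub- and quotient groups, since an extension of ${\rm Cl}_K[n]$ by ${\cal O}_K^{\times}/n$ is not determined by its ends), is precisely the content of the cited results.
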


\begin{rmk}
 By Theorem 2.1.6, for a continuous and surjective homomorphism $\rho : \pi_1(\overline{X}) \rightarrow \mathbb{Z} / n\mathbb{Z}$, there is a corresponding Galois object $\overline{Y} \rightarrow \overline{X}$ ($Y={\rm Spec}\ {\cal O}_L$) whose Galois group is  $\mathbb{Z} / n \mathbb{Z}$. Since $L$ is an  cyclic extension of degree $n$ unramified at all finite and infinite primes, there exists $v \in K^{\times}$ such that $L=K(v^{\frac{1}{n}})$ and there exists $\mathfrak{a} \in I_K$ which satisfies $ {\mathfrak{a} }^n = { (v)^{-1} } $. By the definition of $L$, there is an isomorphism $\overline{\rho} : {\rm Gal}(L/K) \stackrel{\sim}{\rightarrow} \mathbb{Z}/ n \mathbb{Z}$ by the Galois correspondence. Then we can identify $\rho  : \pi_1(\overline{X}) \rightarrow \mathbb{Z} / n\mathbb{Z}$ with the restriction map ${\rm Gal}(\tilde{K}/K) \rightarrow {\rm Gal}(L/K)$. Therefore, when we identify $\rho :{\pi_1}^{ab}(\overline{X}) \rightarrow {\rm Gal}(L/K)$ with $\rho' : {\rm Cl}_K \rightarrow {\rm Gal}(L/K)$ by Proposition 2.1.6,  we have $\left( \frac{L/K}{ \mathfrak{a} } \right) = \rho' ( [ \mathfrak{a}])$ for any $[ \mathfrak{a}] \in {\rm Cl}_K$.

\end{rmk}

Now we move to state the extension of Hochschild-Serre spectral sequence.
\begin{thm}
Let $\overline{Y} \rightarrow \overline{X}$ be a Galois object in ${\rm FEt}_{\overline{X}}$. Then for any $S \in {\rm Sh} ({\overline{X}}_{{\rm \acute{e}t}})$, there is a cohomological spectral sequence

\begin{equation}
{\rm H}^p( {\rm Gal} (\overline{Y} / \overline{X} ),  {\rm H}^q( \overline{Y},  S| \overline{Y} )) \Rightarrow {\rm H}^{p+q} ( \overline{X},  S ). \nonumber
\end{equation}
\end{thm}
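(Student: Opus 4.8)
The plan is to obtain the spectral sequence as the \v{C}ech-to-derived-functor spectral sequence of the single covering $\mathcal{U} = \{\pi : \overline{Y} \to \overline{X}\}$, in exactly the way the classical Hochschild-Serre (Cartan-Leray) spectral sequence is obtained in \'etale cohomology. Write $G = {\rm Gal}(\overline{Y}/\overline{X})$; it is finite, and as observed in the proof of Proposition 2.1.3 it equals ${\rm Aut}_{\overline{X}}(\overline{Y}) = {\rm Aut}_X(Y)$. First I would check that $\mathcal{U}$ is indeed a covering in the sense of Definition 2.1.1: the scheme map $Y \to X$ is finite \'etale, hence surjective, and since $\overline{Y}$ lies in ${\rm FEt}_{\overline{X}}$ and is a Galois object the map $Y_\infty \to X_\infty$ is unramified and onto, so $\pi(Y) = X$ and $\pi(Y_\infty) = X_\infty$. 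The general formalism of sites then supplies, for any $S \in {\rm Sh}(\overline{X}_{{\rm \acute{e}t}})$, a spectral sequence $\check{H}^p(\mathcal{U}, \mathcal{H}^q(S)) \Rightarrow H^{p+q}(\overline{X}, S)$, where $\mathcal{H}^q(S)$ is the presheaf sending an object $(Z,M)$ of $\overline{X}_{{\rm \acute{e}t}}$ to $H^q((Z,M), S|_{(Z,M)})$; what remains is to identify its $E_2$-page with $H^p(G, H^q(\overline{Y}, S|\overline{Y}))$.

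The crux is the Galois description of the \v{C}ech nerve of $\mathcal{U}$. I claim $\overline{Y} \times_{\overline{X}} \overline{Y} \cong \bigsqcup_{g \in G} \overline{Y}$ in ${\rm Cat}(\overline{X}_{{\rm \acute{e}t}})$, whence by iteration the $(n+1)$-fold fibre product over $\overline{X}$ is $\bigsqcup_{G^{n}} \overline{Y}$. Indeed, by Remark 2.1.2 this fibre product is $(Y \times_X Y,\, Y_\infty \times_{X_\infty} Y_\infty)$; on the scheme level $Y \times_X Y \cong \bigsqcup_{g \in G} Y$ is the standard description of a connected Galois covering (using that $Y \to X$ is finite \'etale), while on infinite primes $Y_\infty \to X_\infty$ is a $G$-torsor --- transitive because $Y$ is connected and the associated extension of number fields is Galois, free because $\overline{Y}/\overline{X}$ is unramified at the infinite primes, so every decomposition group there is trivial --- and therefore $Y_\infty \times_{X_\infty} Y_\infty \cong \bigsqcup_{g \in G} Y_\infty$. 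Consequently, for any presheaf $P$ that carries finite disjoint unions to products the \v{C}ech complex $\check{C}^{\bullet}(\mathcal{U}, P)$ is canonically identified with the inhomogeneous cochain complex $C^{\bullet}(G, P(\overline{Y}))$ computing the group cohomology of the $G$-module $P(\overline{Y})$ (with the $G$-action induced by that on $\overline{Y}$); the one genuinely bookkeeping-heavy step is to check that the \v{C}ech coface maps correspond to the bar differential. Applying this with $P = \mathcal{H}^q(S)$ --- which does carry finite disjoint unions to products, since sections over objects of $\overline{X}_{{\rm \acute{e}t}}$ are additive in this way (Remark 2.2.2(3)), and therefore so is each $H^q$ --- gives $\check{H}^p(\mathcal{U}, \mathcal{H}^q(S)) = H^p(G, \mathcal{H}^q(S)(\overline{Y})) = H^p(G, H^q(\overline{Y}, S|\overline{Y}))$, which turns the spectral sequence above into the one asserted.

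I expect the main obstacle to be not anything deep but the consistent treatment of the infinite primes: that $\mathcal{U}$ is a covering, that $\overline{Y} \times_{\overline{X}} \overline{Y}$ decomposes as claimed (this is exactly where the unramified-at-infinity hypothesis and the torsor structure of $Y_\infty \to X_\infty$ enter, via Remark 2.1.2), and that sections and $H^q$ over objects of $\overline{X}_{{\rm \acute{e}t}}$ are additive over finite disjoint unions --- none of these being difficult, but each resting on the decomposition Lemma 2.2.1 and on the explicit structure of the Artin-Verdier site rather than on a black-box citation. An essentially equivalent but homologically heavier route is the Grothendieck composite-functor spectral sequence for $\Gamma(\overline{X}, -) = (-)^{G} \circ \Gamma(\overline{Y}, \pi^{*}(-))$: here one uses that $\overline{Y}_{{\rm \acute{e}t}}$ is the localized site $\overline{X}_{{\rm \acute{e}t}}/\overline{Y}$, so $\pi^{*}$ has an exact left adjoint $\pi_{!}$ and hence preserves injectives, and then --- again through $\overline{Y} \times_{\overline{X}} \overline{Y} \cong \bigsqcup_{g \in G} \overline{Y}$ and the finiteness of $\pi$ --- that $\pi^{*}$ carries an injective $S$ to a sheaf whose global sections form a coinduced, hence $(-)^{G}$-acyclic, $G$-module.
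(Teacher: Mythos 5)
Your proposal is correct, and in fact contains the paper's proof as its final remark: the paper derives the spectral sequence as the Grothendieck composite-functor spectral sequence for $F_1 : S \mapsto S(\overline{Y})$ followed by $F_2 : M \mapsto M^{G}$, checking that $(F_2\circ F_1)(S)=S(\overline{X})$ and that $F_1$ sends injectives to $F_2$-acyclics via the identification ${\rm H}^i(G, I(\overline{Y}))=\check{{\rm H}}^i(\overline{Y}/\overline{X}, I)$, obtained by ``replacing $Y$ and $X$ with $\overline{Y}$ and $\overline{X}$'' in Milne's argument. Your primary route --- the \v{C}ech-to-derived-functor spectral sequence of the one-element covering $\{\overline{Y}\to\overline{X}\}$ with $E_2$-page identified with group cohomology --- is a different packaging of the same mechanism, and both hinge on the identical computational input: the decomposition $\overline{Y}\times_{\overline{X}}\overline{Y}\cong\bigsqcup_{g\in G}\overline{Y}$ in the Artin-Verdier site, which on the scheme level is standard Galois descent and at the infinite primes requires that $Y_\infty\to X_\infty$ be a free transitive $G$-set, i.e.\ that the covering be unramified at infinity. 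The paper leaves this last verification implicit in its appeal to Milne, whereas you isolate it explicitly (covering condition, fibre-product decomposition via Remark 2.1.2, additivity of sections and of ${\rm H}^q$ over disjoint unions); that is exactly the point where the modified site differs from the classical one, so your version is, if anything, the more complete argument. The trade-off is minor: the Grothendieck-spectral-sequence route gives convergence and functoriality for free once acyclicity is checked, while the \v{C}ech route makes the $E_2$-identification transparent but asks you to verify the bar-differential bookkeeping by hand.
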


\begin{proof}
Let ${\rm Gal} (\overline{Y} / \overline{X})$-mod denote the category of ${\rm Gal} (\overline{Y} / \overline{X}) \mbox{-}$modules. We consider the following functors
$$ \begin{array}{c}
F_1 : {\rm Sh} ({\overline{X}}_{{\rm \acute{e}t}}) \rightarrow {\rm Gal} (\overline{Y} / \overline{X}) \mbox{-mod},  S \mapsto S(\overline{Y})\\
F_2 : {\rm Gal} (\overline{Y} / \overline{X})\mbox{-mod} \rightarrow {\rm Ab},  M \mapsto M^{{\rm Gal}(\overline{Y}/\overline{X}) }
\end{array} $$
where the action of $G={\rm Gal} (\overline{Y} / \overline{X})$ on $S(\overline{Y})$ is defined by $\sigma . x =S(\sigma) (x) $ for $x \in S(\overline{Y})$ and $\sigma \in G$. Just like [Mi; Remark5.4] and [Mi; Proposition 1.4], we can easily check $(F_2 \circ F_1) (S) = S(\overline{Y})^{G} = S(\overline{X})$. To use the Grothendieck spectral sequence,  we check $F_1$ takes any injective object $I$ to a $F_2$-acyclic object. By replacing $Y$ and $X$ with $\overline{Y}$ and $\overline{X}$ in the argument of [Mi; example2.6], one can see 
${\rm H}^{i} (G, I(\overline{Y}))={\check{{\rm H}}}^{i} (\overline{Y} / \overline{X},  I )$. Since $I$ is injective,  ${\check{{\rm H}}}^{i} (\overline{Y} / \overline{X},  I )=0$ by the definition of \u{C}ech cohomologies.
\end{proof}

Let $(\overline{Y_{i}} \rightarrow \overline{X}, \overline{Y_{i}} \rightarrow \overline{Y_{j} })$ be the inverse system of finite Galois coverings over $\overline{X}$ and let $\tilde{ \overline{X} }$ be $\mathop{\varprojlim}\limits_{i} Y_i$. 
By ${\rm H}^{p}(\tilde{X},  \mathbb{Z}/ n \mathbb{Z}) = \mathop{\varprojlim}\limits_{i} {\rm H}^{p} (Y_{i},  \mathbb{Z} / n \mathbb{Z})  $ and the local cohomology sequence ([B; Proposition 1.4]), we have ${\rm H}^{p}(\tilde{\overline{X}},  \mathbb{Z}/ n \mathbb{Z}) = \mathop{\varprojlim}\limits_{i} {\rm H}^{p} (\overline{Y_{i}},  \mathbb{Z} / n \mathbb{Z})  $. So on passing to the inverse limit, we obtain the following.
\begin{coro}
There is a cohomological spectral sequence
\begin{equation}
{\rm H}^p( \pi_1(\overline{X}),  {\rm H}^q( \tilde{\overline{X}},  \mathbb{Z}/n \mathbb{Z} )) \Rightarrow {\rm H}^{p+q} ( \overline{X},  \mathbb{Z}/n \mathbb{Z} ). \nonumber
\end{equation}

\end{coro}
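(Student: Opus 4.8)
The plan is to pass to the limit over the tower of finite Galois coverings in the Hochschild-Serre spectral sequence of Theorem 2.2.7. Write the directed system of finite Galois coverings of $\overline{X}$ as $\{\overline{Y_i}\to\overline{X}\}_{i\in I}$, ordered by domination, so that $\pi_1(\overline{X})=\varprojlim_i G_i$ with $G_i={\rm Gal}(\overline{Y_i}/\overline{X})$ and $\tilde{\overline{X}}=\varprojlim_i\overline{Y_i}$. For each $i$, Theorem 2.2.7 applied to $\overline{Y_i}\to\overline{X}$ and $S=\mathbb{Z}/n\mathbb{Z}$ (whose restriction to $\overline{Y_i}$ is again the constant sheaf $\mathbb{Z}/n\mathbb{Z}$) gives a spectral sequence ${}^iE_2^{p,q}={\rm H}^p(G_i,{\rm H}^q(\overline{Y_i},\mathbb{Z}/n\mathbb{Z}))$ abutting to ${\rm H}^{p+q}(\overline{X},\mathbb{Z}/n\mathbb{Z})$. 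Since each $\overline{Y_i}$ is of the form $\overline{{\rm Spec}\ {\cal O}_L}$, Proposition 2.2.5 (with $L$ in place of $K$) shows that the groups ${\rm H}^q(\overline{Y_i},\mathbb{Z}/n\mathbb{Z})$ are finite and vanish for $q>3$; hence each ${}^iE$ is concentrated in the four rows $q=0,1,2,3$, has finite terms, and converges.

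First I would promote the ${}^iE$ to a directed system. For $j\ge i$, the factorization $\overline{Y_j}\to\overline{Y_i}\to\overline{X}$, together with the functoriality in the Galois object of the Grothendieck spectral sequence construction in the proof of Theorem 2.2.7, produces a morphism of spectral sequences ${}^iE\to{}^jE$ which on $E_2$-pages is inflation along $G_j\twoheadrightarrow G_i$ followed by the pullback ${\rm H}^q(\overline{Y_i},\mathbb{Z}/n\mathbb{Z})\to{\rm H}^q(\overline{Y_j},\mathbb{Z}/n\mathbb{Z})$, and which is the identity on the common abutment ${\rm H}^{p+q}(\overline{X},\mathbb{Z}/n\mathbb{Z})$. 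Since filtered colimits of abelian groups are exact, $E:=\varinjlim_i{}^iE$ is again a spectral sequence, still concentrated in four rows and hence convergent, with abutment $\varinjlim_i{\rm H}^{p+q}(\overline{X},\mathbb{Z}/n\mathbb{Z})={\rm H}^{p+q}(\overline{X},\mathbb{Z}/n\mathbb{Z})$.

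It then remains to identify $E_2^{p,q}=\varinjlim_i{\rm H}^p(G_i,{\rm H}^q(\overline{Y_i},\mathbb{Z}/n\mathbb{Z}))$. For fixed $q$, the groups ${\rm H}^q(\overline{Y_i},\mathbb{Z}/n\mathbb{Z})$ form a directed system of finite $G_i$-modules whose transition maps are equivariant for $G_j\to G_i$ and whose colimit is ${\rm H}^q(\tilde{\overline{X}},\mathbb{Z}/n\mathbb{Z})$, by the identification recalled just above (continuity of \'etale cohomology together with the local cohomology sequence of [B; Proposition 1.4]); being a filtered colimit of finite groups, it is a discrete $\pi_1(\overline{X})$-module. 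One then invokes the general fact that $\varinjlim_i{\rm H}^p(G_i,A_i)={\rm H}^p(\pi,\varinjlim_i A_i)$ for $\pi=\pi_1(\overline{X})=\varprojlim_i G_i$ and any compatible directed system of finite $G_i$-modules $A_i$ of this type; this in turn reduces to the standard formula ${\rm H}^p(\pi,A)=\varinjlim_i{\rm H}^p(G_i,A^{\ker(\pi\to G_i)})$ by comparing the directed system $\{A_i\}$ with $\{A^{\ker(\pi\to G_i)}\}$ (which has the same colimit $A$), using that a directed system of finite modules with vanishing colimit contributes nothing to $\varinjlim_i{\rm H}^p(G_i,-)$ since its transition maps on coefficients are eventually zero. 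This gives $E_2^{p,q}={\rm H}^p(\pi_1(\overline{X}),{\rm H}^q(\tilde{\overline{X}},\mathbb{Z}/n\mathbb{Z}))$, and hence the asserted spectral sequence.

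The hard part will be the second step: checking that the ${}^iE$ genuinely form a directed system whose transition morphisms are the identity on the abutment. This is where one must unwind the functoriality of the Grothendieck spectral sequence of Theorem 2.2.7 along the tower $\overline{Y_j}\to\overline{Y_i}\to\overline{X}$; equivalently, the compatibility under $\overline{Y_j}\to\overline{Y_i}$ of the \v{C}ech-to-derived-functor comparison used there to verify $F_2$-acyclicity. Alternatively, one can bypass the passage to the limit and apply the Grothendieck spectral sequence directly to the composite $\Gamma(\overline{X},-)=(-)^{\pi_1(\overline{X})}\circ\Gamma(\tilde{\overline{X}},-)$, where $\Gamma(\tilde{\overline{X}},-)=\varinjlim_i\Gamma(\overline{Y_i},-)$; here the only point is that this functor sends injective sheaves on ${\overline{X}}_{{\rm \acute{e}t}}$ to $\pi_1(\overline{X})$-acyclic discrete modules, which follows from the vanishing of ${\check{{\rm H}}}^r(\overline{Y_i}/\overline{X},I)$ for injective $I$ and $r>0$ (the acyclicity input of Theorem 2.2.7) together with exactness of filtered colimits.
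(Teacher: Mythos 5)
Your proposal is correct and follows essentially the same route as the paper: the paper likewise obtains the corollary by applying Theorem 2.2.7 to each finite Galois covering $\overline{Y_i}\rightarrow\overline{X}$, identifying ${\rm H}^{q}(\tilde{\overline{X}},\mathbb{Z}/n\mathbb{Z})$ with the limit of the ${\rm H}^{q}(\overline{Y_i},\mathbb{Z}/n\mathbb{Z})$ via [B; Proposition 1.4], and passing to the limit over the tower. You have merely spelled out the functoriality and continuity steps that the paper leaves implicit.
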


\section{Arithmetic Dijkgraaf-Witten invariants for a number ring}
\ \ \ In this section, we introduce  arithmetic Chern-Simons invariant for a number field, by using the modified \'etale cohomology groups in the section 2. Let $X={\rm Spec} \ {\cal O}_{K} $, the prime spectrum of the ring of integers in a number field $K$, which includes $n$-th roots of unity. We choose a primitive $n$-th root of unity $\zeta_n$ in $K$ which induces the isomorphism $ \mathbb{Z}/ n \mathbb{Z} \cong \mu_n $. Let $A$ be a finite group and let $c \in {{\rm H}}^3 (A,  \mathbb{Z}/ n \mathbb{Z})$. We set ${\cal M}(\overline{X},A) ={\rm Hom}_c(\pi_1(\overline{X}),A) / A$ be the set of conjugate classes of all continuous homomorphisms $\pi_1(\overline{X}) \rightarrow A$. Recall by Proposition 2.2.5 that we have the fundamental class isomorphism ${{\rm H}}^3({\overline{X}},  \mathbb{Z}/ n \mathbb{Z}) \cong \mathbb{Z}/ n \mathbb{Z}$ that depends on $\zeta_n$.
\begin{dfnS}
For $\rho \in {\cal M}(\overline{X},  A)$, the {\it arithmetic Chern-Simons invariant} $CS_c(\rho)$ of $\rho$ associated to $c$ is defined by the image of $c$ under the composition of the maps
\begin{equation}
{{\rm H}}^3( A,  \mathbb{Z}/n \mathbb{Z}) \xrightarrow{\rho^{\ast}} {{\rm H}}^3( \pi_1(\overline{X}),  \mathbb{Z}/n \mathbb{Z}) \xrightarrow{j_3} {{\rm H}}^3( \overline{X},  \mathbb{Z}/n \mathbb{Z}) \cong \mathbb{Z}/ n \mathbb{Z}, \nonumber
\end{equation}
where $j_3$ is the edge homomorphisms in the modified Hochschild-Serre spectral sequence ${{\rm H}}^p( \pi_1(\overline{X}),  {{\rm H}}^q( \tilde{\overline{X}},  \mathbb{Z}/n \mathbb{Z} )) \Rightarrow {{\rm H}}^{p+q} ( \overline{X},  \mathbb{Z}/n \mathbb{Z} )$ (see the section 2.2 for $\tilde{\overline{X}}$). We can easily see that $CS_c(\rho)$ is independent of the choice of $\rho$ in its conjugate class. The map $$CS_c : {\cal M}(\overline{X},A) \rightarrow \mathbb{Z} / n\mathbb{Z}$$ is called the {\it arithmetc Chern-Simons functional} associated to $c$.
The {\it arithmetic Dijkgraaf-Witten invariant} of $\overline{X}$  associated to $c$ is then defined by
\begin{equation}
\displaystyle{Z_c (\overline{X}) = \sum_{\rho \in  {\cal M}(\overline{X},A)} {\mathrm exp} \left( \frac{2 \pi i}{n} CS_c(\rho) \right)   }. \nonumber
\end{equation}
When $A=\mathbb{Z}/ m \mathbb{Z}$, we call $CS_c(\rho)$ and $Z_c(\overline{X})$ the {\it mod $m$ arithmetic Chern-Simons invariant} and the {\it mod $n$ arithmetic Dijkgraaf-Witten invariant}, respectively.

\end{dfnS}

\begin{rmkS}
(1) If $K$ is totally imaginary,  we have $\pi_{1} (\overline{X}) = \pi_1 (X)$ and ${\rm H}^{i}(\overline{X},  \mathbb{Z}/ n \mathbb{Z}) = {\rm H}^{i}(X,  \mathbb{Z}/ n \mathbb{Z})$ because we don't need considering the ramification at infinite primes. Therefore Definition 3.1 is indeed an extension of Kim's definition([K]).\\
(2) When $A$ is abelian, by Proposition 2.1.6, we have 
$${\cal M}(\overline{X},A)={\rm Hom}_{c}(\pi_1 (\overline{X}),A) ={\rm Hom}({\rm Cl}_K,A).$$
\end{rmkS}

\section{Mod 2 arithmetic Dijkgraaf-Witten invariants for the real quadratic number fields $\mathbb{Q}(\sqrt{p_1 \cdots p_r})$, $p_i\equiv 1$ mod $4$}
\ \ \ In the section 4.1, we extend a formula obtained in [AC] and [BCGKPT], which relates the mod n arithmetic Chern-Simons functional to the Artin symbol, for any number field. Using it and Gauss genus theory, in the section 4.2, we compute explicitly the mod 2 arithmetic Dijkgraaf-Witten invariant for the quadratic fields $\mathbb{Q}(\sqrt{p_1 \cdots p_r})$, $p_i\equiv 1$ mod $4$.
\subsection{A formlula relating with the Artin symbol}
\ \ \ Firstly, we describe the setting in this subsection.  Keeping the same notations as in the section 3, we set $A= \mathbb{Z} / n \mathbb{Z}$ and $c={\mathrm id} \cup \beta({\mathrm id}) \in {\rm H}^3( A,  \mathbb{Z}/n \mathbb{Z})$,  where $id\in {\rm H}^1( A,  \mathbb{Z}/n \mathbb{Z})$ is the identity map and $$\beta : {\rm H}^1( A,  \mathbb{Z}/n \mathbb{Z}) \rightarrow {\rm H}^2( A,  \mathbb{Z}/n \mathbb{Z})$$ is the Bockstein map (connecting homomorphism) induced by the short exact sequence $$0 \rightarrow \mathbb{Z} / n\mathbb{Z} \overset{\times n}{\rightarrow} \mathbb{Z} / n^2 \mathbb{Z} \rightarrow \mathbb{Z} / n\mathbb{Z} \rightarrow 0.$$ Let $j_i :{{\rm H}}^i( \pi_1(\overline{X}),  \mathbb{Z}/n \mathbb{Z}) \rightarrow {{\rm H}}^i( \overline{X},  \mathbb{Z}/n \mathbb{Z}) \ (i=0,1,2,3,\cdots) $ be the edge homomorphisms in the modified Hochschild-Serre spectral sequence (Corollary 2.2.7).  We will abbreviate $j_i \circ \rho^{*}$ to $\rho^{*}_{X}$ for $ \rho \in {\cal M}(\overline{X},A)={\rm Hom}_{c}(\pi_1 (\overline{X}),A)$.  Then  we have
\begin{equation}
CS_c(\rho)=\rho^{*}_{X} ({\mathrm id}) \cup \tilde{\beta} (\rho^{*}_{X} ({\mathrm id})),  \nonumber
\end{equation}
where $\cup : {\rm H}^1( \overline{X},  \mathbb{Z}/n \mathbb{Z}) \times {\rm H}^2( \overline{X},  \mathbb{Z}/n \mathbb{Z}) \rightarrow {\rm H}^3( \overline{X},  \mathbb{Z}/n \mathbb{Z})$ is the cup product and $\tilde{\beta} :  {\rm H}^1( \overline{X},  \mathbb{Z}/n \mathbb{Z}) \rightarrow {\rm H}^2( \overline{X},  \mathbb{Z}/n \mathbb{Z})$ is the Bockstein map.
\begin{rmk}
As for cup products in the category of sheaves on any site, we refer to  [Sw; Corollary 3.7].
\end{rmk}
Before stating the formula relating with the Artin symbol, we recall some calculations on the cohomology of groups.
\begin{lem}
We have $${\rm H}^i( \mathbb{Z}/n \mathbb{Z},  \mathbb{Z}/n \mathbb{Z}) = \mathbb{Z}/n \mathbb{Z} \  (i\geq0)$$ and ${\rm H}^3( \mathbb{Z}/n \mathbb{Z},  \mathbb{Z}/n \mathbb{Z})$ is generated by $c={\mathrm id} \cup \beta({\mathrm id})$. The cohomology class $c$ is represented by the cochain $\alpha : (\mathbb{Z}/ n \mathbb{Z})^3 \rightarrow \mathbb{Z}/n \mathbb{Z}$ defined by 
$$\alpha (g_1,g_2,g_3)=\frac{1}{n} \overline{g_1} (\overline{g_2}+\overline{g_3}-(\overline{g_2+g_3})) \ {\rm mod}\ n,$$
where $\overline{g_i} \in \{ 0,1,\dots,n-1 \}$ such that $\overline{g_i} \  {\rm mod}\ n=g_i.$
\end{lem}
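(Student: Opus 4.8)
The plan is to prove the three assertions in turn: the first from the $2$-periodic free resolution of $\mathbb{Z}$ over the group ring, the third by a direct cochain computation of $\beta({\rm id})$ and the cup product, and the second by comparing the cocycle $\alpha$ with a generator through an explicit chain map between the periodic and bar resolutions. Concretely, set $G=\langle\sigma\rangle\cong\mathbb{Z}/n\mathbb{Z}$, write $D=\sigma-1$ and $N=1+\sigma+\cdots+\sigma^{n-1}$ in $R=\mathbb{Z}[G]$, and recall the standard resolution
\begin{equation}
\cdots \xrightarrow{N} R \xrightarrow{D} R \xrightarrow{N} R \xrightarrow{D} R \xrightarrow{\varepsilon} \mathbb{Z} \to 0 \nonumber
\end{equation}
with free generators $e_0,e_1,e_2,\dots$ and $\partial e_1=De_0$, $\partial e_2=Ne_1$, $\partial e_3=De_2$, continuing periodically. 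Applying ${\rm Hom}_G(-,\mathbb{Z}/n\mathbb{Z})$ and identifying each ${\rm Hom}_G(R,\mathbb{Z}/n\mathbb{Z})$ with $\mathbb{Z}/n\mathbb{Z}$ via evaluation on $e_i$, the transpose of $D$ is multiplication by $\sigma-1$, hence $0$ on a trivial module, and the transpose of $N$ is multiplication by $n$, hence $0$; so every differential of the resulting complex vanishes and ${\rm H}^i(\mathbb{Z}/n\mathbb{Z},\mathbb{Z}/n\mathbb{Z})\cong\mathbb{Z}/n\mathbb{Z}$ for all $i\geq0$. This gives the first assertion.

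For the cochain representing $c$, represent ${\rm id}\in{\rm H}^1(A,\mathbb{Z}/n\mathbb{Z})={\rm Hom}(A,\mathbb{Z}/n\mathbb{Z})$ by the $1$-cocycle $g\mapsto\overline{g}$, lift it to the $\mathbb{Z}/n^2\mathbb{Z}$-valued cochain $g\mapsto\overline{g}$ (with $\overline{g}\in\{0,\dots,n-1\}$), and take its coboundary: writing $A$ additively, this is $\overline{g_1}+\overline{g_2}-\overline{g_1+g_2}$, which lies in $\{0,n\}$, so $\beta({\rm id})$ is represented by the $2$-cocycle $w(g_1,g_2)=\tfrac1n(\overline{g_1}+\overline{g_2}-\overline{g_1+g_2})\bmod n$. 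Since the cup product of cocycles on the normalized bar complex is given by $({\rm id}\cup w)(g_1,g_2,g_3)=\overline{g_1}\,w(g_2,g_3)$, the resulting $3$-cocycle is exactly $\alpha$, and it represents ${\rm id}\cup\beta({\rm id})=c$. This proves the third assertion, and in particular that $\alpha$ is a cocycle.

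Finally, to see that $[\alpha]$ generates ${\rm H}^3\cong\mathbb{Z}/n\mathbb{Z}$, I would use the classical chain map $\phi_\bullet$ from the periodic resolution above to the normalized bar resolution, lifting ${\rm id}_\mathbb{Z}$, given in low degrees by $\phi_0(e_0)=[\,]$, $\phi_1(e_1)=[\sigma]$, $\phi_2(e_2)=\sum_{i=0}^{n-1}[\sigma^i|\sigma]$, and $\phi_3(e_3)=\sum_{i=0}^{n-1}[\sigma|\sigma^i|\sigma]$; checking $\partial\phi_k=\phi_{k-1}\partial$ in these degrees is a short computation in which the degenerate terms $[1|\sigma]$, $[\sigma^n|\sigma]$, $[\sigma|1]$, $[\sigma|\sigma^n]$ vanish. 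Pulling $\alpha$ back, $\alpha(\phi_3(e_3))=\sum_{i=0}^{n-1}\tfrac1n(\overline{i}+\overline{1}-\overline{i+1})$; every summand with $0\leq i\leq n-2$ vanishes, while the $i=n-1$ summand equals $\tfrac1n\cdot n=1$. Hence under the comparison isomorphism ${\rm H}^3_{\rm bar}\xrightarrow{\sim}{\rm H}^3_{\rm per}=\mathbb{Z}/n\mathbb{Z}$ the class $[\alpha]$ maps to the unit $1$, so it generates ${\rm H}^3$, which is the second assertion.

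I expect the only genuine bookkeeping to be the verification that $\phi_\bullet$ as written really is a chain map (the index shifts together with the normalization relations); everything else is a direct, if slightly fiddly, computation. Note that the argument is uniform in $n$, with no distinction between odd and even $n$, so it applies in particular to the case $n=2$ used in Section~4, where ${\rm id}\cup\beta({\rm id})=x^{3}$ for the polynomial generator $x$ of ${\rm H}^{\ast}(\mathbb{Z}/2\mathbb{Z},\mathbb{Z}/2\mathbb{Z})$.
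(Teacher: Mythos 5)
Your proposal is correct, and I verified the one step you flagged as ``bookkeeping'': the maps $\phi_2(e_2)=\sum_{i=0}^{n-1}[\sigma^i|\sigma]$ and $\phi_3(e_3)=\sum_{i=0}^{n-1}[\sigma|\sigma^i|\sigma]$ do satisfy $\partial\phi_k=\phi_{k-1}\partial$ on the normalized bar complex, and the evaluation $\alpha(\phi_3(e_3))=1$ is right (only the $i=n-1$ term contributes). Your route agrees with the paper's for the first assertion --- both use the $2$-periodic resolution, under which all differentials of ${\rm Hom}_G(-,\mathbb{Z}/n\mathbb{Z})$ vanish --- but diverges on how generation of ${\rm H}^3$ is established. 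The paper also computes ${\rm H}^i(\mathbb{Z}/n\mathbb{Z},\mathbb{Z})$, uses the commutative ladder of short exact sequences to factor the Bockstein as $\beta={p_1}_*\circ\beta'$ with both factors isomorphisms (so $\beta({\rm id})$ generates ${\rm H}^2$), and then invokes that the cup product pairing ${\rm H}^1\times{\rm H}^2\to{\rm H}^3$ is nonzero by its construction on the periodic resolution; the explicit cochain formula is then asserted to follow from the definitions. You instead derive the cochain $\alpha$ directly (lift, coboundary, Alexander--Whitney product) and prove generation by pulling $\alpha$ back along an explicit comparison chain map and reading off the value $1\in\mathbb{Z}/n\mathbb{Z}$. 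Your version buys two things: it actually proves the third assertion rather than leaving it as a remark, and it pins down that $[\alpha]$ is \emph{the generator} $1$ rather than merely a nonzero class --- a distinction that matters for composite $n$, where the paper's ``the pairing is not a zero map'' is, strictly read, a slightly weaker statement than what is needed. The cost is the explicit chain-map verification, which is short.
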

\begin{proof}
Although this may be well known, we give a proof for the sake of readers. To calculate ${\rm H}^i( \mathbb{Z}/n \mathbb{Z},  \mathbb{Z}/n \mathbb{Z})$ and ${\rm H}^i( \mathbb{Z}/n \mathbb{Z}, \mathbb{Z})$, we take a projective resolution of $\mathbb{Z}$ as follows,
$$\cdots \overset{\times n}{\rightarrow} \mathbb{Z}[\mathbb{Z}/n \mathbb{Z}] \overset{\times 0}{\rightarrow} \mathbb{Z}[\mathbb{Z}/n \mathbb{Z}] \overset{\times n}{\rightarrow} \mathbb{Z}[\mathbb{Z}/n \mathbb{Z}] \overset{\times 0}{\rightarrow} \mathbb{Z}[\mathbb{Z}/n \mathbb{Z}] \overset{\epsilon}{\rightarrow} \mathbb{Z}, $$
where $\epsilon$ is defined by $\sum_{g} a_g g \mapsto \sum_{g} a_g$. Therefore, by taking the functor ${\rm Hom}_{\mathbb{Z}[\mathbb{Z}/n \mathbb{Z}]}(-,\mathbb{Z}/n \mathbb{Z})$, we have \\\\
(4.1.2.1)\ \ \ \ \ \ \ \ \ \ \ \ \ \  ${\rm H}^i( \mathbb{Z}/n \mathbb{Z},  \mathbb{Z}/n \mathbb{Z}) = \mathbb{Z}/n \mathbb{Z}\ (i\geq0).$ \\\\
By the same manner, we obtain\\\\
(4.1.2.2)\ \ \ \ \ \ \ \ \ \ \ \ \ \  ${\rm H}^i( \mathbb{Z}/n \mathbb{Z},  \mathbb{Z}) =
\begin{cases} 
\mathbb{Z} & (i=0)\\
\mathbb{Z}/n \mathbb{Z} & (i\geq1,\ i {\rm :even})\\
0 & (i\geq1,\ i {\rm :odd}).\\
\end{cases}$ \\ \par Next, we consider the following commutative diagram,
\begin{center}
\[
  \begin{CD}
     0 @>>>  \mathbb{Z}  @>{\times n}>>  \mathbb{Z}  @>>>  \mathbb{Z}/ n\mathbb{Z}  @>>>  0 \\
    @.     @VV{p_1}V  @VV{p_2}V  @VV{id}V   @. \\
     0 @>>>  \mathbb{Z}/n \mathbb{Z} @>{\times n}>>  \mathbb{Z}/n^2 \mathbb{Z} @>>>  \mathbb{Z}/n \mathbb{Z} @>>>  0,
  \end{CD}
\]
\end{center}
where $p_1$ and $p_2$ are   natural projections, and $id$ is the identity map. We see that the connecting homomorphism $\beta' : {\rm H}^{1}( \mathbb{Z}/n \mathbb{Z},  \mathbb{Z} /n \mathbb{Z}) \rightarrow  {\rm H}^{2}( \mathbb{Z}/n \mathbb{Z},  \mathbb{Z})$ associated to the 1st row is an isomorphism, because of  (4.1.2.1) and (4.1.2.2). We also see that the homomorphism ${p_1}_{*} : {\rm H}^{2}( \mathbb{Z}/n \mathbb{Z},  \mathbb{Z} ) \rightarrow {\rm H}^{2}( \mathbb{Z}/n \mathbb{Z},  \mathbb{Z}/n \mathbb{Z})$ induced by $p_1$ is an isomorphism, because of (4.1.2.1) and (4.1.2.2). Therefore, the connecting homomorphism $\beta : {\rm H}^{1}( \mathbb{Z}/n \mathbb{Z},  \mathbb{Z} /n \mathbb{Z}) \rightarrow  {\rm H}^{2}( \mathbb{Z}/n \mathbb{Z},  \mathbb{Z}/n \mathbb{Z})$ associated to the 2nd row, that is the composition of  ${p_1}_{*}$ and $\beta'$, is an isomorphism. So  ${\rm H}^{2}( \mathbb{Z}/n \mathbb{Z},  \mathbb{Z}/n \mathbb{Z})$ is generated by $\beta(id)$. On the other hand, by the construction of the cup product based on the projective resolution, we can see that the paring  $\cup : {\rm H}^{1}( \mathbb{Z}/n \mathbb{Z},  \mathbb{Z}/n \mathbb{Z}) \times {\rm H}^{2}( \mathbb{Z}/n \mathbb{Z},  \mathbb{Z}/n \mathbb{Z}) \rightarrow {\rm H}^{3}( \mathbb{Z}/n \mathbb{Z},  \mathbb{Z}/n \mathbb{Z} ) $ is not a zero map. Therefore ${\rm H}^3( \mathbb{Z}/n \mathbb{Z},  \mathbb{Z}/n \mathbb{Z})$ is generated by $c={\mathrm id} \cup \beta({\mathrm id})$. The last assertion follows from the definition of the Bockstein map and the cup product of the group cohomology.
\end{proof}
Now we move to state the main assertion in this subsection.
\begin{prop}
Let $\rho : \pi_1(\overline{X}) \rightarrow \mathbb{Z}/ n \mathbb{Z}$ be a continuous and surjective homomorphism. We set  $A=\mathbb{Z} / n \mathbb{Z}$ and $c={\mathrm id} \cup \beta({\mathrm id}) \in {\rm H}^3( A,  \mathbb{Z}/n \mathbb{Z})$. Let $L = K(v^{\frac{1}{n}})$ be the  Kummer extension unramified at all finite and infinite primes  corresponding to the kernel of $\rho$ such that there exists $ \mathfrak{a} \in I_K$ satisfying ${\mathfrak{a}}^n =(v)^{-1} $.  We take a generator $ \sigma \in {\rm Gal}(L/K)$  by $\sigma(v^{\frac{1}{n}})/ v^{\frac{1}{n}} = \zeta_n$, where $\zeta_n$ is the primitive $n$-th root of unity chosen before Definition 3.1, and let $\chi : {\rm Gal}(L/K) \stackrel{\sim}{\rightarrow} \mathbb{Z}/ n\mathbb{Z}$ be an isomorphism defined by  $\chi(\sigma) = 1$. Then we have
$$ CS_c(\rho)=\chi \left( \left( \frac{L/K}{\mathfrak{a}} \right) \right).$$
\end{prop}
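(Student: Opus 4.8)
The plan is to unwind the cup-product expression for $CS_c(\rho)$ recorded just before Remark~4.1.1, transport it through Artin--Verdier duality, and read off the Artin symbol by class field theory. Throughout, write $\psi:=\rho^{*}_X({\rm id})=j_1(\rho^{*}({\rm id}))\in{\rm H}^1(\overline X,\mathbb Z/n\mathbb Z)$, so that $CS_c(\rho)=\psi\cup\tilde\beta(\psi)$ in ${\rm H}^3(\overline X,\mathbb Z/n\mathbb Z)\cong\mathbb Z/n\mathbb Z$.

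First I would identify $\psi$. By Proposition~2.2.5 the groups ${\rm H}^1(\overline X,\mathbb Z/n\mathbb Z)$ and ${\rm Hom}(\pi_1(\overline X),\mathbb Z/n\mathbb Z)={\rm Hom}({\rm Cl}_K,\mathbb Z/n\mathbb Z)$ have the same finite order, and the edge map $j_1$ is injective, hence an isomorphism; so $\psi$ is just $\rho$ viewed as a character, and by Proposition~2.1.6 and Remark~2.2.6 (with the normalisation of $\chi$ fixed by $\sigma(v^{1/n})/v^{1/n}=\zeta_n$) it is the character $[\mathfrak b]\mapsto\chi\bigl((L/K,\mathfrak b)\bigr)$ of ${\rm Cl}_K$. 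Moreover, since $\rho$ factors through ${\rm Gal}(L/K)\cong\mathbb Z/n\mathbb Z$, the class $\rho^{*}(c)$ is inflated from ${\rm H}^3({\rm Gal}(L/K),\mathbb Z/n\mathbb Z)$, where it is represented by the cocycle $\alpha$ of Lemma~4.1.2, so the whole computation reduces to one inflation (edge) map. The statement then becomes the equality $\psi\cup\tilde\beta(\psi)=\psi([\mathfrak a])$ in $\mathbb Z/n\mathbb Z$; heuristically this says $CS_c(\rho)$ is the ``self-linking'' of $\psi$ under the arithmetic linking form, $[\mathfrak a]$ being the class (with $\mathfrak a^n=(v)^{-1}$) that represents $\psi$ under that form, which matches the topological picture behind the Appendix.

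The heart of the matter is Artin--Verdier duality. The hypothesis $\mathfrak a^n=(v)^{-1}$ is precisely the statement $(v,\mathfrak a)\in Z_1$, so it gives a class $e:=[(v,\mathfrak a)]\in Z_1/B_1={\rm Ext}^1_{\overline X}(\mathbb Z/n\mathbb Z,\phi_*G_{m,X})$, which by Proposition~2.2.5 is Pontryagin dual to ${\rm H}^2(\overline X,\mathbb Z/n\mathbb Z)$, and which maps to $[\mathfrak a]\bmod n$ under the boundary $Z_1/B_1\to{\rm Cl}_K/n$ attached to $0\to\mathbb Z/n\mathbb Z\to\mathbb Z/n^2\mathbb Z\to\mathbb Z/n\mathbb Z\to0$. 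I would then prove two compatibilities. (i) Reassociating the Yoneda/cup products and using the Kummer description of $L=K(v^{1/n})$, one gets that $\psi\cup\tilde\beta(\psi)$, evaluated through the fundamental class attached to $\zeta_n$, equals the duality pairing $\langle\tilde\beta(\psi),e\rangle$ — here $\zeta_n\in\mu_n(K)={\rm Hom}_{\overline X}(\mathbb Z/n\mathbb Z,\phi_*G_{m,X})$ is exactly what converts $\psi$ into the Kummer class $e$. (ii) The Bockstein $\tilde\beta$ on ${\rm H}^\bullet(\overline X,\mathbb Z/n\mathbb Z)$ is dual, under Artin--Verdier duality, to the boundary $Z_1/B_1\to{\rm Cl}_K/n$ of (i). Granting these, $\langle\tilde\beta(\psi),e\rangle$ factors as $e\mapsto[\mathfrak a]\bmod n$ followed by the tautological pairing ${\rm H}^1(\overline X,\mathbb Z/n\mathbb Z)\times{\rm Cl}_K/n=({\rm Cl}_K/n)^{\sim}\times({\rm Cl}_K/n)\to\mathbb Z/n\mathbb Z$, which is evaluation; hence $\psi\cup\tilde\beta(\psi)=\psi([\mathfrak a])=\chi\bigl((L/K,\mathfrak a)\bigr)$.

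The main obstacle is exactly (i) and (ii): carrying out the reassociation of Yoneda products cleanly, checking that the cup product is adjoint to the Yoneda action ${\rm H}^1(\overline X,\mathbb Z/n\mathbb Z)\times{\rm Ext}^0_{\overline X}(\mathbb Z/n\mathbb Z,\phi_*G_{m,X})\to{\rm Ext}^1_{\overline X}(\mathbb Z/n\mathbb Z,\phi_*G_{m,X})$, and tracking the $\zeta_n$-normalisation on both sides so that no unit of $(\mathbb Z/n\mathbb Z)^{\times}$ is introduced (for $n=2$, which is all that Section~4.2 uses, this is automatic). I would do the bookkeeping on explicit injective resolutions and cocycle representatives, in the spirit of [B], [AC] and [BCGKPT] and of the proof of Lemma~4.1.2; the decomposition lemma for ${\rm Sh}({\overline{X}}_{{\rm \acute{e}t}})$ reduces all the relevant complexes to the generic point together with the finite primes, exactly as in the totally imaginary case treated in [BCGKPT], so that this extension to fields with real primes costs nothing beyond the bookkeeping for the infinite part already packaged in Proposition~2.2.5.
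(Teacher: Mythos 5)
Your proposal is correct and follows essentially the same route as the paper: the paper also identifies $\rho^{*}_{X}({\rm id})$ with the Artin map via Remark~2.2.6, uses [AC; Corollary 3.10] for your compatibility (i) (that the cup product equals the evaluation $\tilde\beta(\rho^{*}_{X}({\rm id}))([(v,\mathfrak a)])$, which by adjointness equals $\rho^{*}_{X}({\rm id})(\tilde\beta'([(v,\mathfrak a)]))$, your (ii)), and uses the analogue of [AC; Lemma 4.1] for the computation $\tilde\beta'([(v,\mathfrak a)])=[\mathfrak a]$, then settles the normalisation by comparing with [BCGKPT; Theorem 1.3] in the totally imaginary case and noting $n=2$ when real places are present. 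The only difference is one of packaging: where you propose to verify the two duality compatibilities by explicit resolutions, the paper simply imports them from [AC] after replacing $X$ by $\overline X$.
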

\begin{proof}
When $K$ is totally imaginary, the  statement is known by [BCGKPT; Theorem 1.3]. So we consider the case $K$ has real places and $n=2$. By Remark 2.2.6, we can identify $\rho^{*}_{X} ({\mathrm id}) \in {\rm H}^1( \overline{X},  \mathbb{Z}/2 \mathbb{Z})$ with Artin map  $\left( \frac{L/K}{ } \right)$. To calculate the cup product $ \rho^{*}_{X} ({\mathrm id}) \cup \tilde{\beta} (\rho^{*}_{X} ({\mathrm id}))$,  we refer to [AC]. We regard $ \tilde{\beta} (\rho^{*}_{X} ({\mathrm id}))$ as an element in ${\rm Ext}^{1}_{\overline{X}} (\mathbb{Z} / 2 \mathbb{Z},  \phi_{*} G_{m,X})^{\sim} = (Z_1/B_1)^{\sim}$ through Artin-Verdier duality. Then by [AC; Corollary 3.10] we have 
\begin{equation}
\rho^{*}_{X} ({\mathrm id}) \cup \tilde {\beta} (\rho^{*}_{X} ({\mathrm id}))= \tilde{\beta} (\rho^{*}_{X} ({\mathrm id})) ([(v,\mathfrak{a})]) = \rho^{*}_{X} ({\mathrm id})(\tilde{\beta}'([(v,  \mathfrak{a})])), \nonumber
\end{equation}
where $\tilde{\beta}' : {\rm Ext}^{1}_{\overline{X}} (\mathbb{Z} / 2 \mathbb{Z},  \phi_{*} G_{m,X}) \rightarrow {\rm Ext}^{2}_{\overline{X}} (\mathbb{Z} / 2 \mathbb{Z},  \phi_{*} G_{m,X})$ is the connecting homomorphism induced by the short exact sequence $$0 \rightarrow \mathbb{Z} / 2\mathbb{Z} \overset{\times 2}{\rightarrow} \mathbb{Z} / 2^2 \mathbb{Z} \rightarrow \mathbb{Z} / 2\mathbb{Z} \rightarrow 0.$$
By replacing $X$ with $\overline{X}$ in the proof of [AC; Lemma 4.1],  one can see $\tilde{\beta}'([(v,  \mathfrak{a})]) = [\mathfrak{a}]$. Therefore we see that $ CS_c(\rho) = 0 $ holds iff $\left( \frac{L/K}{\mathfrak{a}} \right) \in {\rm Gal}(L/K)$ is trivial.  Combining this fact and [BCGKPT; Theorem 1.3], we have the required statement.
\end{proof}

\subsection{Explicit formulas of the mod 2 arithmetic Dijkgraaf-Witten invariants  for real quadratic number fields $\mathbb{Q} (\sqrt{p_1  p_2  \cdots  p_r})$,  $ p_i \equiv 1$ mod $4$ }
\ \ \ In the following,  we consider the case $K=\mathbb{Q} (\sqrt{p_1  p_2  \cdots  p_r})  $, where $p_i$ is a prime number such that $ p_i \equiv 1$ mod $4$. Let $n=2$, $A= \mathbb{Z} / 2 \mathbb{Z}$ and $c={\mathrm id} \cup \beta({\mathrm id}) \in {\rm H}^3( A,  \mathbb{Z}/2 \mathbb{Z})$. Assume that the norm of the fundamental unit in ${\cal O}^{\times}_K$ is $-1$. Then the narrow ideal class group ${\rm Cl}^{+}_K$ is same as ${\rm Cl}_K$. Note that the discriminant of $K$ is $p_1 p_2 \cdots p_r$ because of the assumption, $ p_i \equiv 1$ mod $4$. We define the abelian multiplicative 2-group $T_{\times}$ by 
$$T_{\times}= \{ ( x_1,  x_2,  \cdots,  x_r) \in {\{ \pm 1 \}}^r \;|\; \displaystyle{\prod^{r}_{i=1} x_i } = 1 \},$$
and let $e^{\times}_{ij} \overset{def}{=}(1,  \cdots, 1,\overset{i \mbox{-th}}{-1},1,  \cdots, 1,\overset{j \mbox{-th}}{-1},1,  \cdots, 1) \in T_{\times} \ {\rm for}\ 1 \leqq i < j \leqq r $. We also identify $T_{\times}$ with the abelian additive 2-group $T_{+}$ defined by 
$$T_{+}=\{ ( x_1,  x_2,  \cdots,  x_r) \in {(\mathbb{Z}/2 \mathbb{Z})}^r \;|\; \displaystyle{\sum^{r}_{i=1} x_i } = 0 \}.$$and let $e^{+}_{ij} \overset{def}{=}(0,  \cdots, 0,\overset{i \mbox{-th}}{1},0,  \cdots, 0,\overset{j \mbox{-th}}{1},0,  \cdots, 0) \in T_{+} \ {\rm for}\ 1 \leqq i < j \leqq r $.\par
By Gauss genus theory ([O; $\S 4.7$]), there is an isomorphism
\begin{equation}
{\rm Cl}^{+}_K / 2 {\rm Cl}^{+}_K \stackrel{\sim}{\longrightarrow} T_{\times},  
\end{equation}
given by
\begin{equation}
[\mathfrak{a}] \mapsto \left( \left( \frac{{\rm N}\mathfrak{a}}{p_1} \right),  \left( \frac{{\rm N}\mathfrak{a}}{p_2} \right),  \cdots,  \left( \frac{{\rm N}\mathfrak{a}}{p_r} \right)  \right), \nonumber
\end{equation}
where $\left( \frac{ }{p_i} \right)$ denotes the Legendre symbol. Then we obtain the identifications
\begin{eqnarray}
{\rm Hom}_{c}(\pi_1(\overline{X}),\mathbb{Z}/2\mathbb{Z})&=&{\rm Hom} ({\rm Cl}^{+}_K / 2 {\rm Cl}^{+}_K, \mathbb{Z} / 2\mathbb{Z}) \nonumber \\
                                                                         &=&{\rm Hom}( T_{\times} , {\{ \pm 1 \}} )={\rm Hom}( T_{+} , \mathbb{Z} / 2\mathbb{Z} ), \nonumber 
\end{eqnarray}
by Proposition 2.1.6.\par
Now we move to prove the following formula.

\setcounter{dfn}{1}
\begin{thm}Notations being as above, for $\rho \in {\rm Hom} ( T_{\times} , {\{ \pm 1 \}} )$, we have
\begin{equation}
(-1)^{CS_c(\rho)}=\displaystyle{\underset{\rho(e^{\times}_{ij})=-1}{ {\prod_{ i<j} } } } \left( \frac{p_j}{p_i} \right). \nonumber
\end{equation}
\end{thm}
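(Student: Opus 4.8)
\emph{Proof sketch.} The plan is to use Proposition 4.1.3 to rewrite $CS_c(\rho)$ as the value under $\chi$ of an Artin symbol $\left(\frac{L/K}{\mathfrak{a}}\right)$ for an explicit unramified quadratic extension $L/K$ and an explicit ideal $\mathfrak{a}$ supported on $p_1,\dots,p_r$, and then to evaluate that symbol prime-by-prime by local analysis. Put $d=p_1\cdots p_r$; since each $p_i\equiv 1\bmod 4$ this is the discriminant of $K$, so every $p_i$ ramifies in $K$, say $(p_i)=\mathfrak{p}_i^{2}$ with ${\rm N}\mathfrak{p}_i=p_i$.

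I would first repackage $\rho$ combinatorially. If $\rho$ is trivial then $CS_c(\rho)=0$ and the right-hand side is the empty product $1$; so assume $\rho\neq 0$, hence surjective. Regarding $\rho\in{\rm Hom}(T_{+},\mathbb{Z}/2\mathbb{Z})$, extend it to a linear functional $(x_1,\dots,x_r)\mapsto\sum_{i\in S}x_i$ on $(\mathbb{Z}/2\mathbb{Z})^{r}$; the complementary set $S^{c}$ gives the same $\rho$, since the all-ones functional vanishes on $T_{+}$. Then $\rho(e^{+}_{ij})$ is nontrivial exactly when $i$ and $j$ lie in different blocks of the partition $\{S,S^{c}\}$, so, invoking quadratic reciprocity $\left(\frac{p_j}{p_i}\right)=\left(\frac{p_i}{p_j}\right)$ (valid as $p_i\equiv p_j\equiv 1\bmod 4$), the right-hand side of the theorem becomes $\prod_{i\in S,\,j\in S^{c}}\left(\frac{p_j}{p_i}\right)$, with the ordering $i<j$ now immaterial.

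Next I would identify $L$ and $\mathfrak{a}$. By Gauss genus theory ([O; $\S 4.7$]) the genus field of $K$ is $\mathcal{G}=\mathbb{Q}(\sqrt{p_1},\dots,\sqrt{p_r})$, and under the genus-theory isomorphism ${\rm Cl}^{+}_K/2{\rm Cl}^{+}_K\cong T_{\times}$ the subextension of $\mathcal{G}/K$ cut out by $\ker\rho$ is $L=K(\sqrt{m})$ with $m=\prod_{i\in S}p_i$; it is unramified over $K$ at all finite and infinite primes because $\mathcal{G}/K$ is. From $(p_i)=\mathfrak{p}_i^{2}$ we obtain $(m)=\mathfrak{c}^{2}$ with $\mathfrak{c}=\prod_{i\in S}\mathfrak{p}_i$, so for $v=m^{-1}$ we have $L=K(v^{1/2})$ and $\mathfrak{c}^{2}=(v)^{-1}$. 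Proposition 4.1.3 then gives $CS_c(\rho)=\chi\!\left(\left(\frac{L/K}{\mathfrak{c}}\right)\right)$; since ${\rm Gal}(L/K)\cong\mathbb{Z}/2\mathbb{Z}$ carries a unique isomorphism onto $\mathbb{Z}/2\mathbb{Z}$, this means $(-1)^{CS_c(\rho)}=1$ iff $\left(\frac{L/K}{\mathfrak{c}}\right)$ is trivial, and by multiplicativity of the Artin symbol $\left(\frac{L/K}{\mathfrak{c}}\right)=\prod_{i\in S}\left(\frac{L/K}{\mathfrak{p}_i}\right)$.

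Finally I would compute each local factor. Fix $i\in S$. Because $L/K$ is unramified at $\mathfrak{p}_i$ with $[L:K]=2$, the Frobenius $\left(\frac{L/K}{\mathfrak{p}_i}\right)$ is trivial iff $\mathfrak{p}_i$ splits in $L$ iff $m\in(K_{\mathfrak{p}_i}^{\times})^{2}$. Now $d=(\sqrt{d})^{2}$ is a square in $K_{\mathfrak{p}_i}$ and $d=m\prod_{j\notin S}p_j$, so $m\equiv\prod_{j\notin S}p_j\pmod{(K_{\mathfrak{p}_i}^{\times})^{2}}$; moreover $\prod_{j\notin S}p_j$ is a unit at $\mathfrak{p}_i$ (as $i\in S$), hence a square in $K_{\mathfrak{p}_i}^{\times}$ iff its reduction is a square in the residue field $\mathbb{F}_{p_i}$ (here $p_i$ is odd), i.e. iff $\prod_{j\notin S}\left(\frac{p_j}{p_i}\right)=1$. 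Thus $\left(\frac{L/K}{\mathfrak{p}_i}\right)$ is nontrivial iff $\prod_{j\notin S}\left(\frac{p_j}{p_i}\right)=-1$, and multiplying over $i\in S$,
\[
(-1)^{CS_c(\rho)}=\prod_{i\in S}\prod_{j\notin S}\left(\frac{p_j}{p_i}\right)=\prod_{i\in S,\,j\in S^{c}}\left(\frac{p_j}{p_i}\right),
\]
which is the reformulated right-hand side from the second paragraph. The step I expect to be most delicate is this interplay between the genus-theoretic identification of $L=K(\sqrt m)$ and the local computation at the ramified primes: one must check that the extension assigned to $\rho$ by the genus isomorphism is exactly the one determined by the block $S$ (compatibly with the normalization of $\chi$ in Proposition 4.1.3), and must track $p_i$-adic unit parts carefully when reducing $m$ modulo squares in $K_{\mathfrak{p}_i}^{\times}$; beyond that the argument is quadratic reciprocity together with $\mathbb{F}_2$-linear algebra.
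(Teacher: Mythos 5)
Your argument is correct and shares its overall skeleton with the paper's proof: both reduce to Proposition 4.1.3, both use Gauss genus theory to identify the Kummer extension attached to $\rho$ as $L_\rho=K(\sqrt{m})$ with $m$ a subproduct of the $p_i$'s determined by the ``block'' of indices on which $\rho$ is nontrivial, and both choose the auxiliary ideal as the corresponding product of the ramified primes $\mathfrak{p}_i$ (your $\mathfrak{c}=\prod_{i\in S}\mathfrak{p}_i$ versus the paper's $\mathfrak{a}_v=\prod_{i}\mathfrak{p}_i/\prod_{l}\mathfrak{p}_{j_l}$; these differ by the principal ideal $(\sqrt{d})$ and by the complementation $S\leftrightarrow S^c$, which, as you note, is harmless). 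Where you genuinely diverge is in the final evaluation of the Artin symbol: the paper plugs the class of $\mathfrak{a}_v$ into the explicit Legendre-symbol form (4.2.1) of the genus isomorphism, $[\mathfrak{a}]\mapsto\bigl(\bigl(\tfrac{{\rm N}\mathfrak{a}}{p_i}\bigr)\bigr)_i$, and reads off $\bigl(\tfrac{L_\rho/K}{\mathfrak{a}_v}\bigr)(\sqrt v)/\sqrt v$ coordinatewise, whereas you factor the symbol over the $\mathfrak{p}_i$ and decide each Frobenius $\bigl(\tfrac{L/K}{\mathfrak{p}_i}\bigr)$ by a local square-class computation (replacing $m$ by $d/m$ modulo squares to get a $\mathfrak{p}_i$-unit, then reducing to the residue field $\mathbb{F}_{p_i}$). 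Your route is more self-contained -- it re-derives, at the ramified primes, exactly the information the paper imports from the stated form of the genus map -- and it cleanly sidesteps the coprimality issue in evaluating $\bigl(\tfrac{{\rm N}\mathfrak{a}}{p_i}\bigr)$ when $p_i$ divides ${\rm N}\mathfrak{a}$; the paper's route is shorter because it delegates that local analysis to [O; \S 4.7]. Both versions use quadratic reciprocity for $p_i\equiv p_j\equiv 1 \bmod 4$ to symmetrize the final double product, and both silently require the standing hypothesis that the fundamental unit has norm $-1$ so that ${\rm Cl}_K^{+}={\rm Cl}_K$; your explicit treatment of the trivial $\rho$ is a small point the paper leaves implicit.
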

\begin{proof}
We take a basis of $T_{\times}$ over ${\{ \pm 1 \}}$, $b_1,b_2,\cdots,b_{r-1}$,  which is defined by $$b_1=(-1,1,1,\cdots,-1),  b_2 =(1,-1,1,1,\cdots,-1),  \cdots, b_{r-1} = (1,1,\cdots,1,-1,-1).$$ We define $1 \leqq j_1<j_2\cdots<j_m \leqq r-1$ by
\begin{equation}
\rho(b_i)=
\begin{cases} 
-1 & (i=j_1,j_2,\cdots,j_m)\\
1 & ({\rm otherwise}).\\
\end{cases} \nonumber
\end{equation}
 Note that $\rho(e^{\times}_{ij})=-1$ holds iff  only one of $i$ and $j$ is in $\{j_1,j_2,\cdots,j_m \}.$ Let $L$ be the abelian unramified extension of $K$ corresponding to $2{\rm Cl}_K$ by class field theory (Proposition 2.1.6), precisely, $$L=\mathbb{Q} (\sqrt{p_1},   \sqrt{p_2},  \cdots \sqrt{ p_r} ).$$ Let $L_{\rho}$ be the unramified Kummer extension of $K$ corresponding to the kernel of $\rho \in {\rm Hom}_{c}(\pi_1(\overline{X}),\mathbb{Z}/2\mathbb{Z}).$ So we have 
$$
L_{\rho}=K(\sqrt{v}), \;\; \mathfrak{a}_v^2 = (v)^{-1}, \leqno{(4.2.2.1)} 
$$
where $\mathfrak{a}_v$ is a fractional ideal of $K$.
To use Proposition 4.1.3, we firstly determine this $v$. For $(a_1,a_2,\cdots,a_r) \in T_{\times}$,  we denote by $\mathfrak{a} (a_1,a_2,\cdots,a_r) \in {\rm Cl}^{+}_K / 2 {\rm Cl}^{+}_K$ the corresponding class via (4.2.1). Then for any $(a_1,a_2,\cdots, a_r) \in T_{\times}$,  $\left( \frac{L/K}{\mathfrak{a} (a_1,a_2,\cdots,a_r) } \right) \in {\rm Gal}(L/K)$ is characterized by
\begin{equation}
\left( \frac{L/K}{\mathfrak{a} (a_1,a_2,\cdots,a_r) } \right) (\sqrt {p_i})=a_i \sqrt {p_i}\;\;  (i=1,2,\cdots,r). \nonumber
\end{equation}
 On the other hand, by Remark 2.2.6, $v$ is characterized by
\begin{equation}
\left( \frac{L_{\rho}/K}{\mathfrak{a} (a_1,a_2,\cdots,a_r) } \right) (\sqrt {v}) / \sqrt {v}=\rho(a_1,a_2,\cdots,a_r), \nonumber
\end{equation}
for $(a_1,a_2,\cdots,a_r) \in T_{\times}$.
Since  $\left( \frac{L_{\rho}/K}{} \right)$ is the restriction of $\left( \frac{L/K}{} \right)$ to $L_\rho$,  we can take $v$ as $v= p_{j_1}p_{j_2} \cdots p_{j_m} / p_1 p_2 \cdots p_r$. 
 Since the minimal polynomial of $(1+\sqrt{p_1  p_2  \cdots  p_r})/2$ over $\mathbb{Q}$ is congruent to $(2X-1)^2$ in mod $p_i$, we have
$$(p_i)= {\mathfrak{p}_i}^2,$$
where $\mathfrak{p}_i = (p_i,  \sqrt{p_1 p_2 \cdots p_r}) $ is the prime ideal of ${\cal O}_K.$ Then we can take $\mathfrak{a}_v$ in (4.2.2.1) as 
$$\mathfrak{a}_{v} =\mathfrak{p}_1 \mathfrak{p}_2 \cdots \mathfrak{p}_r / \mathfrak{p}_{j_1} \mathfrak{p}_{j_2} \cdots \mathfrak{p}_{j_m}.$$
We take a generator $ \sigma \in {\rm Gal}(L_{\rho}/K)$  by $\sigma(\sqrt{v})/\sqrt{v}=-1$, and  let $\chi : {\rm Gal}(L_{\rho}/K) \stackrel{\sim}{\rightarrow} \mathbb{Z}/ 2\mathbb{Z}$ be an isomorphism defined by  $\chi(\sigma) = 1$. Then, by Proposition 4.1.3,  we have\\
\begin{eqnarray}
CS_c(\rho) &=&\chi \left( \left( \frac{L_{\rho}/K}{\mathfrak{a}_{v} } \right) \right) \nonumber \\
               &=&\chi \left( \left( \frac{L_{\rho}/K}{\mathfrak{a} \left(  \left( \frac{{\rm N}\mathfrak{a}_v}{p_1} \right),  \left( \frac{{\rm N}\mathfrak{a}_v}{p_2} \right),  \cdots,  \left( \frac{{\rm N}\mathfrak{a}_v}{p_r} \right)   \right)} \right) \right). \nonumber 
\end{eqnarray}
Therefore, we have\\
\begin{eqnarray}
(-1)^{CS_c(\rho)} &=& \left( \frac{L_{\rho}/K}{\mathfrak{a} \left(  \left( \frac{{\rm N}\mathfrak{a}_v}{p_1} \right),  \left( \frac{{\rm N}\mathfrak{a}_v}{p_2} \right),  \cdots,  \left( \frac{{\rm N}\mathfrak{a}_v}{p_r} \right)   \right)} \right) \left( \sqrt{v} \right) \bigg/ \sqrt{v} \nonumber \\
               &=&\displaystyle{\prod^{m}_{l=1} \left( \frac{{\rm N}\mathfrak{a}_v}{p_{j_l}} \right)} \nonumber \\
               &=&\displaystyle{\prod^{m}_{l=1} \underset{i \notin \{ j_1,j_2,\cdots,j_m \} }{ \prod_{1\leq i \leq r} }\left( \frac{p_i}{p_{j_l}} \right)} \nonumber \\
               &=&\displaystyle{\underset{\rho(e^{\times}_{ij})=-1}{ {\prod_{ i<j} } } } \left( \frac{p_j}{p_i} \right).  \nonumber 
\end{eqnarray}
\end{proof}

If we rewrite Proposition4.2.2 for the form $\rho : T_{+} \rightarrow \mathbb{Z}/2\mathbb{Z}$,  we have the following.
\begin{coro}Notations being as above, for $\rho \in {\rm Hom} (T_{+},\mathbb{Z}/2 \mathbb{Z})$, we have
\begin{equation}
CS_c(\rho)=\sum_{i < j} \rho (e^{+}_{ij}) {\rm lk}_2( p_i,  p_j ), \nonumber
\end{equation}
where ${\rm lk}_2( p_i,  p_j )$ is the modulo $2$ linking number of $p_i$ and $p_j$ defined by $(-1)^{{\rm lk}_2(p_i,p_j)} = \left( \frac{p_i}{p_j} \right)$.
\end{coro}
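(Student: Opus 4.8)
The plan is to obtain this corollary as a direct translation of the preceding theorem from the multiplicative group $\{\pm 1\}$ to the additive group $\mathbb{Z}/2\mathbb{Z}$. First I would fix the isomorphism $\{\pm 1\}\xrightarrow{\sim}\mathbb{Z}/2\mathbb{Z}$ sending $-1\mapsto 1$ and $1\mapsto 0$; this is precisely the identification already used above to pass from $T_{\times}$ to $T_{+}$ and from $e^{\times}_{ij}$ to $e^{+}_{ij}$, and under it a homomorphism $\rho:T_{\times}\to\{\pm 1\}$ corresponds to a homomorphism $\rho:T_{+}\to\mathbb{Z}/2\mathbb{Z}$ with $\rho(e^{\times}_{ij})=-1$ if and only if $\rho(e^{+}_{ij})=1$. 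Under the same identification the quantity $(-1)^{CS_c(\rho)}$ on the left-hand side of the preceding theorem records exactly the class $CS_c(\rho)\in\mathbb{Z}/2\mathbb{Z}$.

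Next I would rewrite the product on the right-hand side of that theorem. Since each $p_i\equiv 1\bmod 4$, quadratic reciprocity gives $\left(\frac{p_j}{p_i}\right)=\left(\frac{p_i}{p_j}\right)$, so that $\left(\frac{p_j}{p_i}\right)=(-1)^{{\rm lk}_2(p_i,p_j)}$ with ${\rm lk}_2$ as defined in the statement, and ${\rm lk}_2(p_i,p_j)={\rm lk}_2(p_j,p_i)$. Hence the product over the pairs $i<j$ with $\rho(e^{\times}_{ij})=-1$ equals $(-1)^{S}$, where $S$ is the sum of the ${\rm lk}_2(p_i,p_j)$ over those same pairs. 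Because every $\rho(e^{+}_{ij})$ lies in $\{0,1\}$, inserting the factor $\rho(e^{+}_{ij})$ into the $ij$-th summand extends the sum harmlessly over all pairs $i<j$ without changing its value, so $S=\sum_{i<j}\rho(e^{+}_{ij}){\rm lk}_2(p_i,p_j)$.

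Comparing exponents modulo $2$ and noting that both $CS_c(\rho)$ and $\sum_{i<j}\rho(e^{+}_{ij}){\rm lk}_2(p_i,p_j)$ already lie in $\mathbb{Z}/2\mathbb{Z}$ then yields the asserted identity. There is no real obstacle: the corollary is a pure reformulation of the preceding theorem, and the only points requiring care are bookkeeping ones — the direction of the isomorphism $\{\pm 1\}\cong\mathbb{Z}/2\mathbb{Z}$, which prime occupies which slot of the Legendre symbol, and the (harmless) appeal to reciprocity that makes ${\rm lk}_2(p_i,p_j)$ depend only on the unordered pair $\{p_i,p_j\}$.
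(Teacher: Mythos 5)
Your proposal is correct and matches the paper, which states this corollary as an immediate rewriting of Theorem 4.2.2 with no separate proof. Your explicit appeal to quadratic reciprocity for $p_i\equiv 1\bmod 4$ to reconcile $\left(\frac{p_j}{p_i}\right)$ with the definition $(-1)^{{\rm lk}_2(p_i,p_j)}=\left(\frac{p_i}{p_j}\right)$ is exactly the bookkeeping point the paper leaves implicit.
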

By Definition 3.1, the mod 2 arithmetic Dijkgraaf-Witten invariant of $\overline{X}$ associated to $c$ is given as follows.
\begin{coro}
Notations being as above,  we have
\begin{equation}
Z_c(\overline{X})=\sum_{\rho \in {\rm Hom}( T_{+} , \mathbb{Z}/2 \mathbb{Z})} \left( \prod_{i < j}  {\left( \frac{p_i}{p_j} \right)}^ {\rho (e^{+}_{ij})} \right). \nonumber 
\end{equation}
\end{coro}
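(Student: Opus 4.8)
The plan is to unwind the definition of $Z_c(\overline{X})$ given in Definition 3.1 and feed in the explicit description of the Chern--Simons functional obtained just above. First I would observe that since $A = \mathbb{Z}/2\mathbb{Z}$ we are in the case $n = 2$, so each summand $\exp\!\left(\frac{2\pi i}{n} CS_c(\rho)\right)$ is simply $(-1)^{CS_c(\rho)}$, and that by the chain of identifications recorded after (4.2.1) — namely ${\cal M}(\overline{X},A) = {\rm Hom}(\pi_1(\overline{X}),\mathbb{Z}/2\mathbb{Z}) = {\rm Hom}({\rm Cl}^+_K/2{\rm Cl}^+_K,\mathbb{Z}/2\mathbb{Z}) = {\rm Hom}(T_+,\mathbb{Z}/2\mathbb{Z})$ via Proposition 2.1.6 and Gauss genus theory — the index set of the sum is exactly ${\rm Hom}(T_+,\mathbb{Z}/2\mathbb{Z})$. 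This reduces the claim to showing that for each such $\rho$ one has $(-1)^{CS_c(\rho)} = \prod_{i<j}\left(\frac{p_i}{p_j}\right)^{\rho(e^+_{ij})}$.

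Next I would invoke Corollary 4.2.3, which asserts $CS_c(\rho) = \sum_{i<j} \rho(e^+_{ij})\,{\rm lk}_2(p_i,p_j)$ in $\mathbb{Z}/2\mathbb{Z}$, together with the defining relation $(-1)^{{\rm lk}_2(p_i,p_j)} = \left(\frac{p_i}{p_j}\right)$. Exponentiating the sum and using $(-1)^{a+b} = (-1)^a (-1)^b$ turns it into a product, giving $(-1)^{CS_c(\rho)} = \prod_{i<j}\big((-1)^{{\rm lk}_2(p_i,p_j)}\big)^{\rho(e^+_{ij})} = \prod_{i<j}\left(\frac{p_i}{p_j}\right)^{\rho(e^+_{ij})}$. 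Substituting this back into $Z_c(\overline{X}) = \sum_{\rho\in{\rm Hom}(T_+,\mathbb{Z}/2\mathbb{Z})} (-1)^{CS_c(\rho)}$ yields precisely the stated formula.

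I expect no genuine obstacle here: the result is an immediate corollary of Definition 3.1 and Corollary 4.2.3, and the only points warranting a line of care are that $\left(\frac{p_i}{p_j}\right)^{\rho(e^+_{ij})}$ is well defined because $\rho(e^+_{ij}) \in \mathbb{Z}/2\mathbb{Z}$ while $\left(\frac{p_i}{p_j}\right) = \pm 1$, and that passing from a $\mathbb{Z}/2\mathbb{Z}$-valued exponent to a sign via $k \mapsto (-1)^k$ is a well-defined homomorphism, so the rewriting of the exponential of a sum as a product of signs is legitimate.
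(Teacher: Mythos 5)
Your argument is correct and is exactly the paper's (essentially unwritten) proof: the paper derives this corollary directly from Definition 3.1 with $n=2$ and Corollary 4.2.3, just as you do. The two points of care you flag are fine, noting also that $\left(\frac{p_i}{p_j}\right)=\left(\frac{p_j}{p_i}\right)$ here since $p_i\equiv p_j\equiv 1 \bmod 4$, so the symmetry implicit in writing ${\rm lk}_2(p_i,p_j)$ causes no trouble.
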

\begin{exm}
Here are some numerical examples of $CS_c(\rho)$ and $Z_c(\overline{X})$ for the case $r=3$. 
We define $\rho_0$, $\rho_1$, $\rho_2$ and $\rho_3$ in ${\rm Hom}(T_{+},\mathbb{Z}/ 2\mathbb{Z})$ by
$$\rho_0(1,1,0)=0,\  \rho_0(0,1,1)=0,\  \rho_0(1,0,1)=0,$$ 
$$\rho_1(1,1,0)=1,\  \rho_1(0,1,1)=0,\  \rho_1(1,0,1)=1,$$ 
$$\rho_2(1,1,0)=0,\  \rho_2(0,1,1)=1,\  \rho_2(1,0,1)=1,$$ 
$$\rho_3(1,1,0)=1,\  \rho_3(0,1,1)=1,\  \rho_3(1,0,1)=0,$$
so that ${\rm Hom}(T_{+},\mathbb{Z}/ 2\mathbb{Z})=\{ \rho_0, \rho_1, \rho_2, \rho_3 \}.$\\
(1) $K=\mathbb{Q}(\sqrt{5 \cdot 29 \cdot 37}):$
$${\rm lk}_2(5,29)=0, \ {\rm lk}_2(29,37)=1, \ {\rm lk}_2(37,5)=1,$$ 
$$CS_c(\rho_0)=0,\  CS_c(\rho_1)=1,\  CS_c(\rho_2)=0,\  CS_c(\rho_3)=1,$$ 
$$Z_c(\overline{X})=0.$$ 
(2) $K=\mathbb{Q}(\sqrt{5 \cdot 13 \cdot 73}):$
$${\rm lk}_2(5,13)={\rm lk}_2(13,73)={\rm lk}_2(73,5)=1,$$
$$CS_c(\rho_0)=CS_c(\rho_1)=CS_c(\rho_2)=CS_c(\rho_3)=0,$$
$$Z_c(\overline{X})=4.$$
\end{exm}

\section{Appendix on mod $2$ Dijkgraaf-Witten invariants for double branched covers of the 3-sphere}
\ \ \ In this appendix, we present  topological analogues of Theorem 4.2.2,  Corollary 4.2.3 and Corollary 4.2.4 in the content of Dijkgraaf-Witten theory for 3-manifolds. For this, we firstly recall the following ${\rm M^2KR}$-dictionary, due to Mazur, Morishita , Kapranov and Reznikov, concerning  the analogies  between 3-dimensional topology and number theory (cf. [Mo]).

\begin{table}[htb]
\begin{center}
 \begin{tabular}{|c||c|} \hline
  3-dimensional topology & number theory \\ \hline \hline
  connected, oriented and closed & compactified  spectrum of a numbr ring  \\
  3-manifold $M$ &   $\overline{X}= \overline{{\rm Spec}\ {\cal O}_k} $ \\ \hline\
  knot & maximal ideal  \\  
  $K\ : \ S^1 \rightarrow M $ & $\mathfrak{p}={\rm Spec}\ ({\cal O}_k/\mathfrak{p}) \rightarrow {\rm Spec}\ ({\cal O}_k)$ \\ \hline
  link & finite set of maximal ideals \\
  $L=K_1\cup K_2 \cup \cdots \cup K_r$ & $S=\{\mathfrak{p_1},\mathfrak{p_2},\cdots,\mathfrak{p_r}\}$ \\ \hline 
  fundamental group & modified \'etale fundamental group  \\ 
  $\pi_1(M)$ &  $\pi_1(\overline{X})$ \\ \hline
  1-cycle group $Z_1(M)$ & ideal group $I_K$ \\
  1-boundary group $B_1(M)$ & principal ideal group $P_K$ \\
  $\partial : C_2(M) \rightarrow Z_1(M) ; S \mapsto \partial S$ & $\partial : K^{\times} \rightarrow I_K ; a \mapsto (a) $ \\ \hline
  1st integral homology group & ideal class group \\
  ${\rm H}_1(M)=Z_1(M)/B_1(M)$ &  ${\rm Cl}_K=I_K/P_K$ \\ \hline
  Hurewicz isomorphism & Artin reciprocity \\
  ${\pi_1(M)}^{ab} \cong {\rm Gal}(M^{ab} / M)  \cong {\rm H}_1(M)$ & $\pi_1(\overline{X})^{ab} \cong {\rm Gal} ({\tilde{K}}^{ab}/K) \cong {\rm Cl}_K $ \\ \hline
  Poincar\'e duality & Artin-Verdier duality \\
  ${\rm H}^i(M, \mathbb{Z} / n \mathbb{Z}) \cong {\rm H}_{3-i}(M , \mathbb{Z} / n \mathbb{Z})$ & ${\rm H}^i(\overline{X}, \mathbb{Z} / n \mathbb{Z}) \cong { {\rm Ext}^{3-i}_{\overline{X}}( \mathbb{Z} / n \mathbb{Z}, \phi_{*} G_{m,X}) }^{\sim}$ \\ \hline
  
 \end{tabular}
\end{center}
\end{table}

In Section 5.1, we introduce the Dijkgraaf-Witten invariants for 3-manifolds. Following  the ${\rm M^2KR}$-dictionary, in Section 5.2, we show a topological analogue of Theorem 4.1.3 and in Section 5.3, we prove  topological analogues of  Corollary 4.2.3 and Corollary 4.2.4.

\subsection{Dijkgraaf-Witten invariants for 3-manifolds}
\ \ \ In this subsection, we introduce the Dijkgraaf-Witten invariants in a manner, which is slightly different from the original one ([DW]), in order to  clarify the analogy between the Dijkgraaf-Witten invariant for a 3-manifold and the arithmetic Dijkgraaf-Witten invariant for a number ring. As a preparation of defining the invariant, we firstly show the following proposition that is the topological analogue of Corollary 2.2.8.
\begin{prop}
Let $M$ be a connected, compact 3-manifold. Then, for $n\geq2$, there is an cohomological spectral sequence
\begin{equation}
{\rm H}^p( \pi_1(M),  {\rm H}^q( \tilde{M},  \mathbb{Z}/n \mathbb{Z} )) \Rightarrow {\rm H}^{p+q} (M,  \mathbb{Z}/n \mathbb{Z} ), \nonumber
\end{equation}
where $\tilde{M}$ is the universal covering of $M$.
\end{prop}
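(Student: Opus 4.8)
The plan is to mimic the construction in Section 2.2 but in the purely topological setting, replacing the Artin–Verdier site by the universal covering space and the functors $F_1, F_2$ by their topological analogues. First I would observe that the universal cover $\tilde M \to M$ plays the role of $\tilde{\overline X}$, and that $\pi_1(M)$ acts on $\tilde M$ by deck transformations, hence on any (singular) cochain complex $C^{\bullet}(\tilde M, \mathbb{Z}/n\mathbb{Z})$ and on the cohomology groups ${\rm H}^q(\tilde M, \mathbb{Z}/n\mathbb{Z})$. The Grothendieck spectral sequence I want comes from the composition of two functors: $F_1$ sending a sheaf (or, more elementarily, sending the constant coefficient module $\mathbb{Z}/n\mathbb{Z}$) to the $\pi_1(M)$-module of cochains on $\tilde M$, and $F_2 = (-)^{\pi_1(M)}$ taking $\pi_1(M)$-invariants. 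The key identity to verify is $(F_2\circ F_1)(\mathbb{Z}/n\mathbb{Z}) = C^{\bullet}(M,\mathbb{Z}/n\mathbb{Z})$, i.e.\ that the $\pi_1(M)$-invariant cochains on $\tilde M$ recover cochains on $M$; this is the standard fact that a singular cochain on $M$ with values in $\mathbb{Z}/n\mathbb{Z}$ is the same as a $\pi_1(M)$-equivariant cochain on $\tilde M$.

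Concretely I would proceed as follows. Step one: set up the chain-level statement. Since $\tilde M \to M$ is a normal covering with group $G = \pi_1(M)$, the singular chain complex $C_{\bullet}(\tilde M)$ is a complex of free $\mathbb{Z}[G]$-modules, and $C_{\bullet}(\tilde M)\otimes_{\mathbb{Z}[G]}\mathbb{Z} \cong C_{\bullet}(M)$; dually $\operatorname{Hom}_{\mathbb{Z}[G]}(C_{\bullet}(\tilde M), \mathbb{Z}/n\mathbb{Z}) \cong C^{\bullet}(M,\mathbb{Z}/n\mathbb{Z})$. Step two: factor the functor $C^{\bullet}(\tilde M,\mathbb{Z}/n\mathbb{Z})\mapsto C^{\bullet}(M,\mathbb{Z}/n\mathbb{Z})$ through the category of $G$-modules, exactly as in the proof of Theorem 2.2.7, with $F_1(\mathbb{Z}/n\mathbb{Z}) = C^{\bullet}(\tilde M,\mathbb{Z}/n\mathbb{Z})$ (a complex of $G$-modules, each term an induced/co-induced module hence $F_2$-acyclic) and $F_2 = (-)^{G}$. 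Step three: the derived functors of $F_2$ on the cohomology are precisely $\operatorname{H}^p(G, -) = \operatorname{H}^p(\pi_1(M), -)$, and the derived functors computed by the $F_1$-side give $\operatorname{H}^q(\tilde M,\mathbb{Z}/n\mathbb{Z})$, so the Grothendieck spectral sequence of the composite reads
\[
{\rm H}^p(\pi_1(M), {\rm H}^q(\tilde M, \mathbb{Z}/n\mathbb{Z})) \Rightarrow {\rm H}^{p+q}(M,\mathbb{Z}/n\mathbb{Z}),
\]
as claimed. Alternatively — and this is perhaps cleaner to write — one can simply invoke that $\tilde M \to M$ is a $K(G,1)$-fibration up to homotopy when $\tilde M$ is contractible, or more generally use the Cartan–Leray spectral sequence for the action of $G$ on $\tilde M$, which is exactly this spectral sequence; I would cite a standard reference (e.g.\ Brown, \emph{Cohomology of Groups}, or McCleary) and note that it applies verbatim since $M$ compact forces all groups in sight to be manageable.

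The main obstacle, such as it is, is essentially bookkeeping rather than mathematics: one must make sure the coefficient system is genuinely constant (here $\mathbb{Z}/n\mathbb{Z}$ with trivial $\pi_1$-action, which is automatic), and that the acyclicity hypothesis in the Grothendieck spectral sequence machine is met — i.e.\ that $F_1$ carries injectives (or, in the explicit cochain model, the relevant resolving objects) to $F_2$-acyclic $G$-modules. This is the exact analogue of the \v{C}ech-cohomology vanishing argument used at the end of the proof of Theorem 2.2.7: injective $G$-modules have vanishing higher group cohomology, and the cochain modules $C^k(\tilde M,\mathbb{Z}/n\mathbb{Z}) = \operatorname{Hom}_{\mathbb{Z}}(C_k(\tilde M),\mathbb{Z}/n\mathbb{Z})$ are co-induced (since $C_k(\tilde M)$ is $\mathbb{Z}[G]$-free), hence acyclic for $(-)^G$ by Shapiro's lemma. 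Once that is in place the spectral sequence drops out formally, and no hypothesis beyond connectedness and compactness of $M$ is actually needed (compactness only guarantees finite generation, which is not even used in the existence statement).
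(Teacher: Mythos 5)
Your argument is correct, but it follows a different route from the one in the paper. The paper stays inside sheaf theory so as to run exactly parallel to Theorem 2.2.7: it first identifies singular cohomology of the compact manifold $M$ with the cohomology of the constant sheaf, then applies the Grothendieck spectral sequence to the composition of the functors $F_1 : {\rm Sh}(M) \rightarrow {\rm Gal}(\tilde{M}/M)\mbox{-mod}$, $S \mapsto S(\tilde{M})$, and $F_2 = (-)^{{\rm Gal}(\tilde{M}/M)}$, checking as in the arithmetic case (via the \v{C}ech-cohomology computation) that $F_1$ carries injective sheaves to $F_2$-acyclic modules. You instead work entirely at the level of singular chains: $C_\bullet(\tilde{M})$ is $\mathbb{Z}[\pi_1(M)]$-free on lifts of the singular simplices of $M$, so $\mathrm{Hom}_{\mathbb{Z}}(C_k(\tilde{M}),\mathbb{Z}/n\mathbb{Z})$ is a (product of) coinduced module(s), hence $(-)^{G}$-acyclic, and the two hypercohomology spectral sequences of the cochain complex of $G$-modules give exactly the Cartan--Leray spectral sequence. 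This is a legitimate and in fact more elementary derivation; it avoids the sheaf-versus-singular comparison entirely (so, as you note, compactness is not needed for existence), at the cost of losing the formal parallelism with the arithmetic construction that the paper is deliberately emphasizing. The one point worth spelling out if you write this up is that the cochain modules are \emph{products} of coinduced modules, so you should note that group cohomology commutes with products (or equivalently that such products remain $(-)^G$-acyclic) rather than quoting Shapiro's lemma for a single coinduced module.
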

\begin{proof}
Since $M$ is compact, the singular cohomology ${\rm H}^{i}(M,\mathbb{Z}/ n \mathbb{Z})$ can be identified with the cohomology of the constant sheaf $\mathbb{Z}/ n \mathbb{Z}$ on $M$. So we show the required statement for the cohomology of the constant sheaf.  We denote by ${\rm Gal} (\tilde{M} / M)$-mod  the category of ${\rm Gal} (\tilde{M} / M) \mbox{-}$modules. We consider the following functors
$$ \begin{array}{c}
F_1 : {\rm Sh} (M) \rightarrow {\rm Gal} (\tilde{M} / M) \mbox{-mod},  S \mapsto S(\tilde{M})\\
F_2 : {\rm Gal} (\tilde{M} / M)\mbox{-mod} \rightarrow {\rm Ab},  R \mapsto R^{{\rm Gal}(\tilde{M}/M) }
\end{array} $$
where the action of $G={\rm Gal} (\tilde{M}/M)$ on $S(\tilde{M})$ is defined by $\sigma . x =S(\sigma) (x) $ for $x \in S(\tilde{M})$ and $\sigma \in G$.  In the same way as Proposition 2.2.7,  we can  check $(F_2 \circ F_1) (S) = S(\tilde{M})^{G} = S(M)$ and that $F_1$ takes any injective object $I$ to a $F_2$-acyclic object.  So we have the required spectral sequence by the Grothendieck spectral sequence and $\pi_1(M) \cong {\rm Gal}(\tilde{M}/M)$.
\end{proof}
Now we define the Dijkgraaf-Witten invaritnat for a 3-manifold as follows.
\begin{dfn}
Let $M$ be a connected, oriented and closed 3-manifold, let $n\geq2$, let $A$ be a finite group and let $c \in {{\rm H}}^3 (A,  \mathbb{Z}/ n \mathbb{Z})$. We set ${\cal M}(M,A) ={\rm Hom}(\pi_1(M),A)/A$. Since $M$ is compact and oriented, there is an isomorphism ${\rm H}_3(M,\mathbb{Z}/ n \mathbb{Z}) \cong \mathbb{Z}/ n \mathbb{Z}$ and we denote by $[M] \in {\rm H}_3(M,\mathbb{Z}/ n \mathbb{Z})$ the fundamental homology class of $M$. For $\rho \in {\cal M}(M, A)$, the {\it  Chern-Simons invariant}  $CS_c(\rho)$ of $\rho$ associated to $c$ is defined by the image of $c$ under the composition of the maps
\begin{equation}
{{\rm H}}^3( A,  \mathbb{Z}/n \mathbb{Z}) \xrightarrow{\rho^{\ast}} {{\rm H}}^3( \pi_1(M),  \mathbb{Z}/n \mathbb{Z}) \xrightarrow{j_3} {{\rm H}}^3( M,  \mathbb{Z}/n \mathbb{Z}) \xrightarrow{<\ \ ,[M]>} \mathbb{Z}/ n \mathbb{Z}, \nonumber
\end{equation}
where $j_3$ is the edge homomorphisms in the spectral sequence of Proposition 5.1.1 ${\rm H}^p( \pi_1(M),  {\rm H}^q( \tilde{M},  \mathbb{Z}/n \mathbb{Z} )) \Rightarrow {\rm H}^{p+q} (M,  \mathbb{Z}/n \mathbb{Z} )$. The {\it Dijkgraaf-Witten invariant} of $M$  associated to $c$ is then defined by
\begin{equation}
\displaystyle{Z_c (M) = \sum_{\rho \in  {\cal M}(M,A)} {\mathrm exp} \left( \frac{2 \pi i}{n} CS_c(\rho) \right)   }. \nonumber
\end{equation}
When $A=\mathbb{Z}/ m \mathbb{Z}$, we call $CS_c(\rho)$ and $Z_c(M)$ the {\it mod $m$ Chern-Simons invariant} and the {\it mod $n$ Dijkgraaf-Witten invariant}, respectively.
\end{dfn}
\begin{rmk}
The Dijkgraaf-Witten invariant was originally defined as follows ([DW]). Let $M$ be a connected, oriented and closed 3-manifold. Let $U(1)=\{ z\in \mathbb{C} \ |\ |z|=1   \}$. We denote by $[M] \in {\rm H}_3(M,\mathbb{Z})$ the   fundamental homology class of $M$.  Let $A$ be a finite group.  Let $c \in {{\rm H}}^3 (A,  U(1))$. Then the Dijkgraaf-Witten invariant, denoted by $DW_c(M)$, is defined by $$ DW_c(M) ={\displaystyle \underset{\rho \in {\rm Hom}(\pi_1(M),A)}{\sum} <{f_\rho}^* c , [M]>}, $$  where $f_\rho : M \rightarrow {\rm B}A$ is a classifying  map with respect to $\rho$ and $<\ ,\ > \ : {{\rm H}}^3 (M,  U(1)) \times {{\rm H}}_3 (M,  \mathbb{Z}) \rightarrow U(1)$ is the natural pairing. The relation between this definition and Definition 5.1.2 is given as follows. Let $A=\mathbb{Z}/n\mathbb{Z}$. By the isomorphism $ {{\rm H}}^3 (A,  U(1)) \cong \mu_n \subset U(1)$,  we can regard $c$ as an \ $n$-th root of unity $\zeta_{n,c}$ in $U(1)$. Then, one can check that, for any $\rho \in {\rm Hom}(\pi_1(M),A)$, we have $$\zeta_{n,c}^{CS_{id \cup \beta (id)}(\rho)} \cong <{f_\rho}^* c , [M]>,$$ where $id$ and $\beta$ are defined as in Section 4.1.  In paticular, when $\zeta_{n,c}={\rm exp}(\frac{2 \pi i}{n})$,  we have $$DW_c(M)=Z_{{\mathrm id} \cup \beta({\mathrm id})}(M).$$
\end{rmk}

\subsection{The formlula relating with Hurewicz isomorphism}
\ \ \ In this subsection, we show the topological analogue of Proposition 4.1.3.  We firstly describe the setting in this subsection.  Keeping the same notations as  Section 5.1, we set $A= \mathbb{Z} / n \mathbb{Z}$ and $c={\mathrm id} \cup \beta({\mathrm id}) \in {\rm H}^3( A,  \mathbb{Z}/n \mathbb{Z})$,  where $id\in {\rm H}^1( A,  \mathbb{Z}/n \mathbb{Z})$ is the identity map and $$\beta^i : {\rm H}^i( A,  \mathbb{Z}/n \mathbb{Z}) \rightarrow {\rm H}^{i+1}( A,  \mathbb{Z}/n \mathbb{Z})\ (i=0,1,2,\cdots)$$ is the Bockstein map (connecting homomorphism) induced by the short exact sequence $$0 \rightarrow \mathbb{Z} / n\mathbb{Z} \overset{\times n}{\rightarrow} \mathbb{Z} / n^2 \mathbb{Z} \rightarrow \mathbb{Z} / n\mathbb{Z} \rightarrow 0.$$ We denote the Bockstein maps with respect to the singular homology and cohomology induced by the short exact sequence $$0 \rightarrow \mathbb{Z} / n\mathbb{Z} \overset{\times n}{\rightarrow} \mathbb{Z} / n^2 \mathbb{Z} \rightarrow \mathbb{Z} / n\mathbb{Z} \rightarrow 0,$$  by $$\beta^i : {\rm H}^i( M,  \mathbb{Z}/n \mathbb{Z}) \rightarrow {\rm H}^{i+1}( M,  \mathbb{Z}/n \mathbb{Z}),\ \ \beta_i : {\rm H}_i( M,  \mathbb{Z}/n \mathbb{Z}) \rightarrow {\rm H}_{i-1}( M,  \mathbb{Z}/n \mathbb{Z}),$$ for $i=0,1,2,\cdots$. We also denote the Bockstein map with respect to the singular homology induced by the short exact sequence $$0 \rightarrow \mathbb{Z}\overset{\times n}{\rightarrow} \mathbb{Z} \rightarrow \mathbb{Z} / n\mathbb{Z} \rightarrow 0,$$  by $$\tilde{\beta}_i : {\rm H}_i( M,  \mathbb{Z}/n \mathbb{Z}) \rightarrow {\rm H}_{i-1}( M,  \mathbb{Z}),$$ for $i=0,1,2,\cdots$.  Let $j_i :{{\rm H}}^i( \pi_1(M),  \mathbb{Z}/n \mathbb{Z}) \rightarrow {{\rm H}}^i( M,  \mathbb{Z}/n \mathbb{Z}) \ (i=0,1,2,3,\cdots) $ be the edge homomorphisms in the  spectral sequence of Proposition 5.1.1.  We will abbreviate $j_i \circ \rho^{*}$ to $\rho^{*}_{M}$ for $ \rho \in {\cal M}(M,A)={\rm Hom}(\pi_1 (M),A)/A$. We denote by $\Phi^i : {\rm H}^i(M , \mathbb{Z} / n \mathbb{Z}) \stackrel{\sim}{\rightarrow} {\rm H}_{3-i}(M,\mathbb{Z}/n \mathbb{Z})\ (i=0,1,2,3)$ the isomorphism of the Poincar\'e duality defined by $u \mapsto u \cap [M]$, where 
$$\cap : {\rm H}^i(M,\mathbb{Z}/n \mathbb{Z}) \times {\rm H}_3(M,\mathbb{Z}/n \mathbb{Z}) \rightarrow {\rm H}_{3-i}(M,\mathbb{Z}/n \mathbb{Z})$$ is the cap product. Note that, by the universal coefficient theorems, we have $${\rm H}_1(M,\mathbb{Z}/n\mathbb{Z}) \cong {\rm H}_1(M) \otimes \mathbb{Z}/ n\mathbb{Z} \cong {\rm H}_1(M) / n {\rm H}_1(M).$$ Then, by the Hurewicz isomorphism, we obtain the following identification,
\begin{eqnarray}
{\rm Hom}(\pi_1(M),\mathbb{Z}/n\mathbb{Z})&=&{\rm Hom} ( {\rm H}_1(M) , \mathbb{Z} / n\mathbb{Z}) = {\rm Hom}( {\rm H}_1(M,\mathbb{Z}/n\mathbb{Z}) , \mathbb{Z}/ n \mathbb{Z}) \nonumber \\
                                                                         &=&{\rm H}^1(M,\mathbb{Z}/ n \mathbb{Z}).\nonumber 
\end{eqnarray}
For $\rho \in {\rm Hom}(\pi_1(M),\mathbb{Z}/n\mathbb{Z})$, by the definition of $j_1$,  we can identify $\rho^{*}_{M}({\mathrm id}) \in {\rm H}^1(M,\mathbb{Z}/ n \mathbb{Z})$ with $\rho$ under the identification above. We denote by $$\tilde{\rho} \in {\rm Hom}( {\rm H}_1(M,\mathbb{Z}/n\mathbb{Z}) , \mathbb{Z}/ n \mathbb{Z})$$ the corresponding homomorphism to $\rho$ and $\rho^{*}_{M}({\mathrm id})$. Now we move to state the main assertion in this subsection.
\begin{prop}
Notations being as above, let $u \in Z_2(M,\mathbb{Z}/ n \mathbb{Z})$ be a 2-cycle that represents $\Phi^1(\rho^{*}_{M}({\mathrm id})) \in {\rm H}_2(M,\mathbb{Z}/ n\mathbb{Z})$. Then there is a 2-chain $D \in C_2(M,\mathbb{Z})$ such that $D\ {\rm mod} \ n=u$ and there is a 1-cycle $\mathfrak{a} \in Z_1(M,\mathbb{Z})$ satisfying $\partial D = n \mathfrak{a}$. Let $[\mathfrak{a}]$ be the homology class in ${\rm H}_1(M,\mathbb{Z} / n\mathbb{Z})$ defined by $\mathfrak{a}$.  Then we have
$$ CS_c(\rho)=\tilde{\rho}([\mathfrak{a}]).$$
\end{prop}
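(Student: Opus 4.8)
The plan is to transpose the proof of Proposition 4.1.3 to the topological side, with Poincar\'e duality and the Kronecker pairing taking over the roles played there by the Artin--Verdier duality and the Yoneda pairing. Throughout I will write $x = \rho^{*}_{M}({\rm id}) \in {\rm H}^1(M, \mathbb{Z}/n\mathbb{Z})$, the class identified with $\rho$ (equivalently with $\tilde\rho$) via the Hurewicz and universal-coefficient identifications recalled just before the statement.

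First I would unwind $CS_c(\rho)$ by factoring $j_3 \circ \rho^{*}$ through the classifying map $f_\rho : M \to {\rm B}A$ of Remark 5.1.3, using that $j_i\circ\rho^{*}$ is induced by $f_\rho$ on each degree. Since $f_\rho^{*}$ is a ring homomorphism compatible with the Bockstein, naturality of the cup product and of $\beta$ gives $j_3(\rho^{*} c) = f_\rho^{*}({\rm id} \cup \beta({\rm id})) = f_\rho^{*}({\rm id}) \cup \beta^{1}(f_\rho^{*}({\rm id})) = x \cup \beta^{1}(x)$, so that
\begin{equation}
CS_c(\rho) = \langle x \cup \beta^{1}(x),  [M] \rangle . \nonumber
\end{equation}
The problem is thus reduced to evaluating this product class on the fundamental class, and the two remaining ingredients are the cup--cap adjunction and a chain-level identification of a Bockstein.

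Next I would apply the cup--cap adjunction $\langle a \cup b,  [M] \rangle = \langle b,  a \cap [M] \rangle$ with $a = x$ and $b = \beta^{1}(x)$; since $x \cap [M] = \Phi^{1}(x)$ is represented by $u$ by hypothesis, this gives $CS_c(\rho) = \langle \beta^{1}(x),  [u] \rangle$. I would then move the Bockstein to the homology side, using that the cohomology and homology Bocksteins of $0 \to \mathbb{Z}/n\mathbb{Z} \to \mathbb{Z}/n^2\mathbb{Z} \to \mathbb{Z}/n\mathbb{Z} \to 0$ are adjoint for the Kronecker pairing — the analogue of the passage from $\tilde\beta$ to $\tilde\beta'$ in the proof of Proposition 4.1.3 — to obtain $CS_c(\rho) = \langle x,  \beta_{2}([u]) \rangle$, where $\beta_{2} : {\rm H}_2(M, \mathbb{Z}/n\mathbb{Z}) \to {\rm H}_1(M, \mathbb{Z}/n\mathbb{Z})$ is that homology Bockstein. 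Finally $\beta_{2}([u]) = [\mathfrak{a}]$: this is the topological analogue of the identity $\tilde\beta'([(v,\mathfrak{a})]) = [\mathfrak{a}]$ of [AC; Lemma 4.1], and here it is immediate from the data in the statement, since the lift $D \bmod n^2 \in C_2(M, \mathbb{Z}/n^2\mathbb{Z})$ of $u$ satisfies $\partial(D \bmod n^2) = (n\mathfrak{a}) \bmod n^2$, which is exactly the image of $\mathfrak{a} \bmod n$ under $\times n$ (the cycle condition on $\mathfrak{a}$ being forced by $n\partial\mathfrak{a} = \partial^2 D = 0$). Combining this with the fact that $\langle x,  - \rangle$ agrees with $\tilde\rho$ on ${\rm H}_1(M, \mathbb{Z}/n\mathbb{Z})$ under the universal-coefficient identification yields $CS_c(\rho) = \tilde\rho([\mathfrak{a}])$.

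The step I expect to need the most care is not any single computation but the sign bookkeeping that runs through the whole chain of equalities: I must fix mutually compatible normalizations for the cup--cap adjunction, for the cohomology and homology Bocksteins, for the Poincar\'e isomorphisms $\Phi^i$, and for the Kronecker pairing, so that no spurious sign survives. Because $x$ has cohomological degree $1$ and $[M]$ is the reduction of an integral class (so $\beta_3([M]) = 0$), one checks the relevant signs are all $+1$; alternatively the Bockstein-adjunction step can be replaced by the Leibniz rule $\beta_2(x \cap [M]) = \beta^{1}(x) \cap [M] - x \cap \beta_3([M]) = \beta^{1}(x) \cap [M]$, which avoids that sign altogether. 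Apart from this normalization issue, the ingredients are all standard and I do not anticipate a genuine obstacle.
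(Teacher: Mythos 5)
Your argument is correct and is essentially the paper's own proof: both reduce $CS_c(\rho)=\langle x\cup\beta^1(x),[M]\rangle$ to $\tilde\rho(\beta_2([u]))$ via the cup--cap adjunction together with the compatibility of the Bockstein with capping against $[M]$ (your Leibniz-rule variant is exactly the paper's identity $\Phi^2\circ\beta^1=\beta_2\circ\Phi^1$), and then identify $\beta_2([u])=[\mathfrak{a}]$ from the chain-level description of the connecting homomorphism, which the paper phrases through $\beta_2=p_{1*}\circ\tilde\beta_2$ and the integral Bockstein. The only cosmetic difference is that you justify $j_3(\rho^*c)=x\cup\beta^1(x)$ by factoring through the classifying map, where the paper simply invokes Definition 5.1.2.
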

\begin{proof}
The former assertion follows by examining the short exact sequence of the chain complexes,
$$0 \rightarrow C_i(M,\mathbb{Z}) \overset{\times n}{\rightarrow} C_i(M,\mathbb{Z}) \overset{{\rm mod} \ n}{\rightarrow} C_i(M,\mathbb{Z} / n\mathbb{Z}) \rightarrow 0,$$ For the latter assertion,  we  note that, by direct calculation, we can check $\Phi^2 \circ \beta^1 =\beta_2 \circ \Phi^1.$ Then, by Definition 5.1.2, we have,
\begin{eqnarray}
CS_c(\rho) &=& <\rho^{*}_{M}({\mathrm id}) \cup \beta^1(\rho^{*}_{M}({\mathrm id})), [M]> \nonumber \\
               &=& <\rho^{*}_{M}({\mathrm id}),\Phi^2(\beta^1(\rho^{*}_{M}({\mathrm id})))> \nonumber \\
               &=& \tilde{\rho}(\beta_2(\Phi^1(\rho^{*}_{M}({\mathrm id})))). \nonumber
\end{eqnarray} 
Next, we consider the following commutative diagram,
\begin{center}
\[
  \begin{CD}
     0 @>>>  \mathbb{Z}  @>{\times n}>>  \mathbb{Z}  @>>>  \mathbb{Z}/ n\mathbb{Z}  @>>>  0 \\
    @.     @VV{p_1}V  @VV{p_2}V  @VV{id}V   @. \\
     0 @>>>  \mathbb{Z}/n \mathbb{Z} @>{\times n}>>  \mathbb{Z}/n^2 \mathbb{Z} @>>>  \mathbb{Z}/n \mathbb{Z} @>>>  0,
  \end{CD}
\]
\end{center}
where $p_1$ and $p_2$ are   natural projections, and $id$ is the identity map. By considering the connecting homomorphism with respect to the singular homologies for each row, we see that $\beta_2 = {p_1}_* \circ \tilde{\beta}_2.$ Then the required statement immediately holds by the definition of $\tilde{\beta}_2$.
\end{proof}
 
\subsection{A topological analogue of the explicit formulas of the mod 2  Dijkgraaf-Witten invariants for double branched covers of the 3-sphere}
\ \ \ In this subsection, we prove  topological analogues of Theorem 4.2.2,  Corollary 4.2.3 and Corollary 4.2.4. Keeping the same notations as  Section 4.2,  Section 5.1 and Section5.2, we will consider the case $A=\mathbb{Z}/2\mathbb{Z}$ and $c={\mathrm id} \cup \beta({\mathrm id}) \in {\rm H}^3( A,  \mathbb{Z}/2 \mathbb{Z})$  in Definition 5.1.2. Let ${\cal L} = {\cal K}_1 \cup {\cal K}_2 \cup \cdots \cup {\cal K}_r$ be a tame link in the 3-sphere $S^3$. Let $h: M \rightarrow S^3$ be the double covering ramified over ${\cal L}$ obtained by the Fox completion ([F]) of the unramified covering $Y\rightarrow X  :=S^3 \backslash {\cal L}$ corresponding to the kernel of the surjective homomorphism ${\rm H}_1(X) \rightarrow \mathbb{Z}/2\mathbb{Z}$ that maps any meridian of ${\cal K}_i$ to $1\in \mathbb{Z}/2\mathbb{Z}.$ Then, by the topological analogue of  Gauss genus theory ([Mo2; Corollary]), there is an isomorphism
\begin{equation}
 g:{\rm H}_1(M) / 2 {\rm H}_1(M) \stackrel{\sim}{\rightarrow} T_{+},
\end{equation}
given by
$$ [\mathfrak{a}] \mapsto ({\rm lk}(h_*(\mathfrak{a}),{\cal K}_i) \ {\rm mod}\ 2), $$
where ${\rm lk}(\ ,\ ) \ {\rm mod}\ 2$ denotes the mod $2$  linking number.  Then we obtain the identifications
\begin{eqnarray}
{\rm Hom}(\pi_1(M),\mathbb{Z}/2\mathbb{Z})&=&{\rm Hom} ( {\rm H}_1(M) , \mathbb{Z} / 2\mathbb{Z}) = {\rm Hom}( {\rm H}_1(M,\mathbb{Z}/2\mathbb{Z}) , \mathbb{Z}/ 2 \mathbb{Z}) \nonumber \\
                                                         &=&{\rm Hom}(T_{+},\mathbb{Z} / 2\mathbb{Z}) \nonumber \\
                                                         &=&{\rm H}^1(M,\mathbb{Z}/ 2 \mathbb{Z}).\nonumber 
\end{eqnarray}
by Section 5.2. \par
Now we move to prove a topological  analogue of Corollary 4.2.3.
\begin{thm}Notations being as above, for $\rho \in {\rm Hom} (T_{+},\mathbb{Z}/2 \mathbb{Z})$,  we have
\begin{equation}
CS_c(\rho)=\sum_{i < j} \rho (e^{+}_{ij}){\rm lk}( {\cal K}_i,  {\cal K}_j)  \  {\rm mod}\ 2. \nonumber
\end{equation}
\end{thm}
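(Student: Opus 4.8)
The plan is to translate the proof of Theorem 4.2.2 into topology, with Proposition 5.2.1 replacing Proposition 4.1.3, a Seifert surface of a sublink of ${\cal L}$ replacing the Kummer element $v$, and the lifted branch components $\widetilde{{\cal K}}_i := h^{-1}({\cal K}_i)$ replacing the ramified primes $\mathfrak{p}_i$ with $(p_i)=\mathfrak{p}_i^2$; the relation $\mathfrak{a}_v^2=(v)^{-1}$ will be mirrored by a boundary relation $\partial D = 2\mathfrak{a}$ in $M$. Recall that $h$ has degree $2$ and restricts to a homeomorphism $\widetilde{{\cal K}}_i \to {\cal K}_i$, so $h_{*}\widetilde{{\cal K}}_i = {\cal K}_i$ while $h_{*}$ of a $2$-chain of the form $h^{-1}(F)$ equals $2F$. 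Given $\rho\in{\rm Hom}(T_{+},\mathbb{Z}/2\mathbb{Z})$, I would first choose a lift $s=(s_1,\dots,s_r)\in(\mathbb{Z}/2\mathbb{Z})^r$ of $\rho$ along the surjection $(\mathbb{Z}/2\mathbb{Z})^r\twoheadrightarrow{\rm Hom}(T_{+},\mathbb{Z}/2\mathbb{Z})$ dual to $T_{+}\hookrightarrow(\mathbb{Z}/2\mathbb{Z})^r$, and set $S=\{i\mid s_i=1\}$, so that $\rho(e^{+}_{ij})=s_i+s_j$ is $1$ exactly when precisely one of $i,j$ lies in $S$.

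Then I would pick a Seifert surface $F\subset S^3$ for the sublink ${\cal L}_S:=\bigcup_{i\in S}{\cal K}_i$, transverse to ${\cal L}$, and set $D:=h^{-1}(F)\in C_2(M,\mathbb{Z})$ and $\mathfrak{a}:=\sum_{i\in S}\widetilde{{\cal K}}_i\in Z_1(M,\mathbb{Z})$. From $h_{*}D=2F$ we get $h_{*}(\partial D)=2\,\partial F=2\sum_{i\in S}{\cal K}_i$; since $\partial D$ is supported on $h^{-1}({\cal L}_S)=\bigsqcup_{i\in S}\widetilde{{\cal K}}_i$ and $h_{*}\widetilde{{\cal K}}_i={\cal K}_i$, this forces $\partial D=2\mathfrak{a}$, hence $u:=D\bmod 2$ lies in $Z_2(M,\mathbb{Z}/2\mathbb{Z})$. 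I would check that $u$ represents $\Phi^1(\rho^{*}_{M}({\rm id}))$ by computing, for a loop $\gamma$ in $M$ disjoint from $h^{-1}({\cal L}_S)$, the mod $2$ intersection number $D\cdot\gamma=(h_{*}\gamma)\cdot F={\rm lk}({\cal L}_S,h_{*}\gamma)\equiv\sum_{i\in S}{\rm lk}({\cal K}_i,h_{*}\gamma)\pmod 2$; by the definition of the isomorphism $g$ of (5.3.1) and the identification of $\rho^{*}_{M}({\rm id})$ with $\tilde\rho$, this equals $\rho(g([\gamma]))=\rho^{*}_{M}({\rm id})([\gamma])$, and since such loops generate ${\rm H}_1(M,\mathbb{Z}/2\mathbb{Z})$, Poincar\'e duality gives $[u]=\rho^{*}_{M}({\rm id})\cap[M]=\Phi^1(\rho^{*}_{M}({\rm id}))$. (That the choice of $S$ rather than its complement does not matter here reflects the fact that ${\rm lk}({\cal L},h_{*}\gamma)$ is always even, equivalently that $g$ takes values in $T_{+}$ — the analogue of $\prod_i x_i=1$ in $T_{\times}$.)

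Applying Proposition 5.2.1 to $u$, $D$ and $\mathfrak{a}$ then yields $CS_c(\rho)=\tilde\rho([\mathfrak{a}])$, and since $\tilde\rho=\rho\circ g$ and $g$ is additive, $CS_c(\rho)=\sum_{i\in S}\rho(g([\widetilde{{\cal K}}_i]))$. By the definition of $g$, for $k\ne i$ the $k$-th coordinate of $g([\widetilde{{\cal K}}_i])$ is ${\rm lk}({\cal K}_i,{\cal K}_k)\bmod 2$, and since $g([\widetilde{{\cal K}}_i])\in T_{+}$ its $i$-th coordinate is forced to equal $\sum_{k\ne i}{\rm lk}({\cal K}_i,{\cal K}_k)\bmod 2$ (the analogue of pinning down the diagonal entry of the genus symbol of $[\mathfrak{p}_i]$, for which $\big(\tfrac{p_i}{p_i}\big)$ is unavailable). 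Hence $\rho(g([\widetilde{{\cal K}}_i]))=\sum_{k\ne i}(s_i+s_k)\,{\rm lk}({\cal K}_i,{\cal K}_k)\bmod 2$, and summing over $i\in S$ (where $s_i=1$) and using the symmetry of the linking number collapses the double sum to $\sum_{i\in S,\,k\notin S}{\rm lk}({\cal K}_i,{\cal K}_k)=\sum_{i<j}\rho(e^{+}_{ij})\,{\rm lk}({\cal K}_i,{\cal K}_j)\bmod 2$, which is the asserted formula.

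The part I expect to be delicate is the geometric setup rather than the cohomological bookkeeping: arranging $F$ transverse to all of ${\cal L}$, checking that $h^{-1}(F)$ is a bona fide $2$-chain with $h_{*}D=2F$ and with $\partial D=2\mathfrak{a}$ including the correct signs along the branch locus, and justifying the identity $D\cdot\gamma=(h_{*}\gamma)\cdot F$ (which uses that $h$ is a local homeomorphism off the branch locus) after perturbing $\gamma$ so that all the relevant linking numbers are defined. Once these are in place, the argument runs parallel to the proof of Theorem 4.2.2, with the defining relation $\sum_i x_i=0$ of $T_{+}$ playing the role of $\prod_i x_i=1$ in $T_{\times}$.
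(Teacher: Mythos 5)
Your proof is correct and follows essentially the same route as the paper: both apply Proposition 5.2.1 with the 2-chain taken to be the preimage under $h$ of Seifert surfaces bounding the sublink determined by a lift of $\rho$, the 1-cycle $\mathfrak{a}$ a sum of lifted branch components $\tilde{{\cal K}}_i$, and then evaluate $\tilde{\rho}([\mathfrak{a}])$ through the genus isomorphism $g$ of (5.3.1) in terms of mod 2 linking numbers. The only real difference is that you verify explicitly (via intersection numbers with loops) that $h^{-1}(F)\bmod 2$ represents $\Phi^1(\rho^{*}_{M}({\mathrm id}))$ and that $\partial D=2\mathfrak{a}$, points the paper simply asserts ``by following the construction of the isomorphism of the Poincar\'e duality.''
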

\begin{proof}
We take a basis of $T_{+}$ over $\mathbb{Z}/2\mathbb{Z}$, $b_1,b_2,\cdots,b_{r-1}$,  which is defined by $$b_1=(1,0,0,\cdots,1),  b_2 =(0,1,0,0,\cdots,1),  \cdots, b_{r-1} = (0,0,\cdots,0,1,1).$$ We define $1 \leqq j_1<j_2\cdots<j_m \leqq r-1$ by
\begin{equation}
\rho(b_i)=
\begin{cases} 
1 & (i=j_1,j_2,\cdots,j_m)\\
0 & ({\rm otherwise}).\\
\end{cases} \nonumber
\end{equation}
\ \ \ We set  $\tilde{{\cal K}}_i := h^{-1}({\cal K}_i)$ and $\tilde{{\cal S}}_i := h^{-1}({\cal S}_i)$ for $i=1,2,\cdots,r$, where ${\cal S}_i$ is a Seifert surface of ${\cal K}_i$ in $S^3.$ Then, by following the construction of the isomorphism of the Poincar\'e duality, we can take a 2-cycle that represents $\Phi^1(\rho^{*}_{M}({\mathrm id})) \in {\rm H}_2(M,\mathbb{Z}/ 2\mathbb{Z})$, by $${\displaystyle \sum^{r}_{i=1} {\tilde{{\cal S}}_i}  - \sum_{i \in \{ j_1,j_2, \cdots ,j_m \} } \tilde{{\cal S}}_i }.$$ So we can take a 1-cycle $\mathfrak{a}_{\rho} \in Z_1(M,\mathbb{Z})$ in Proposition 5.2.1 by $$ \mathfrak{a}_{\rho}= {\displaystyle \sum^{r}_{i=1} \tilde{ {\cal K}_i } - \sum_{i \in \{ j_1,j_2, \cdots ,j_m \} } \tilde{ {\cal K}_i} }.$$ Therefore, by Proposition 5.1.2, we have
\begin{eqnarray}
CS_c(\rho) &=&\tilde{\rho}([\mathfrak{a}_{\rho}])  \nonumber \\
               &=&\rho(g(\mathfrak{a}_{\rho}))  \nonumber \\
               &=&\rho(({\rm lk}(h_*(\mathfrak{a}_{\rho}),{\cal K}_i)\ {\rm mod}\ 2 )  \nonumber \\
               &=&{\displaystyle \sum^{m}_{l=1} {\rm lk}(h_*(\mathfrak{a}_{\rho}),{\cal K}_{j_l}) \ {\rm mod}\ 2} \nonumber \\
               &=&{\displaystyle \sum^{m}_{l=1} \sum_{ i \notin \{ j_1,j_2,\cdots,j_m \} } {\rm lk}({\cal K}_{i},{\cal K}_{j_l}) \ {\rm mod}\ 2} \nonumber \\
               &=&{\displaystyle \sum_{i < j} \rho (e^{+}_{ij}) {\rm lk}( {\cal K}_i,  {\cal K}_j )} \ {\rm mod}\ 2. \nonumber
\end{eqnarray} 
\end{proof}
By Definition 5.1.2, the mod 2 Dijkgraaf-Witten invariant of $M$ associated to $c$ is given as follows.
\begin{coro}
Notations being as above,  we have
\begin{equation}
Z_c(M)={\displaystyle \sum_{\rho \in {\rm Hom}(T_+ , \mathbb{Z}/2\mathbb{Z})} {\mathrm exp} \left(\pi i { \underset{i < j}{\sum} \rho (e^{+}_{ij}) {\rm lk}( {\cal K}_i,  {\cal K}_j ) \ {\rm mod}\ 2}\right) } . \nonumber 
\end{equation}
\end{coro}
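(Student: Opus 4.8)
The plan is to transport the proof of Theorem 4.2.2 to the topological side through the ${\rm M^2KR}$-dictionary, with Proposition 5.2.1 playing the role of Proposition 4.1.3 and the pair $(\mathcal{K}_i,\mathcal{S}_i)$ (a component together with a Seifert surface) corresponding to the pair $(\mathfrak{p}_i,\text{the relation }(p_i)=\mathfrak{p}_i^2)$. First I would fix the basis $b_1,\dots,b_{r-1}$ of $T_+$ with $b_i=e^+_{i,r}$, and let $J=\{j_1<\cdots<j_m\}\subseteq\{1,\dots,r-1\}$ be the set of indices $i$ with $\rho(b_i)=1$. Putting $\rho(b_r):=0$ one has $e^+_{ij}=b_i+b_j$ for all $i<j$, hence $\rho(e^+_{ij})=1$ precisely when exactly one of $i,j$ lies in $J$; this elementary remark is what will eventually turn a double sum over $J$ and its complement into the sum over pairs in the statement.

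The core of the argument is to exhibit geometrically a $2$-cycle representing the Poincar\'e dual $\Phi^1(\rho^{*}_{M}(\mathrm{id}))\in{\rm H}_2(M,\mathbb{Z}/2\mathbb{Z})$, together with an integral $2$-chain $D$ reducing to it mod $2$ whose boundary is twice a $1$-cycle, as required by Proposition 5.2.1. Choosing Seifert surfaces $\mathcal{S}_i$ for $\mathcal{K}_i$ in general position and setting $\tilde{\mathcal{S}}_i:=h^{-1}(\mathcal{S}_i)$, $\tilde{\mathcal{K}}_i:=h^{-1}(\mathcal{K}_i)$, I would use the identification $\rho^{*}_{M}(\mathrm{id})=\rho$ under ${\rm H}^1(M,\mathbb{Z}/2\mathbb{Z})={\rm Hom}({\rm H}_1(M,\mathbb{Z}/2\mathbb{Z}),\mathbb{Z}/2\mathbb{Z})$ together with the genus-theory isomorphism $g$, so that $\langle\rho^{*}_{M}(\mathrm{id}),[\gamma]\rangle=\rho(g([\gamma]))=\sum_{k\in J}{\rm lk}(h_*\gamma,\mathcal{K}_k)\bmod 2$ for a $1$-cycle $\gamma$. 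Evaluating the intersection pairing against such $\gamma$ taken disjoint from the branch locus: the pullback/transversality formula for the genuine double covering $M\backslash h^{-1}(\mathcal{L})\to S^3\backslash\mathcal{L}$ gives $\tilde{\mathcal{S}}_i\cdot\gamma\equiv{\rm lk}(\mathcal{K}_i,h_*\gamma)\pmod 2$, and since $h_*\gamma$ lies in the index-$2$ subgroup $\ker({\rm H}_1(S^3\backslash\mathcal{L})\to\mathbb{Z}/2\mathbb{Z})$ we have $\sum_{i=1}^r{\rm lk}(\mathcal{K}_i,h_*\gamma)\equiv 0$; hence $\sum_{k\in J}{\rm lk}(h_*\gamma,\mathcal{K}_k)\equiv\sum_{i\notin J}(\tilde{\mathcal{S}}_i\cdot\gamma)\pmod 2$, which identifies $u:=\bigl(\sum_{i=1}^r\tilde{\mathcal{S}}_i-\sum_{i\in J}\tilde{\mathcal{S}}_i\bigr)\bmod 2$ with a cycle representing $\Phi^1(\rho^{*}_{M}(\mathrm{id}))$. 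Then I would take $D:=\sum_{i=1}^r\tilde{\mathcal{S}}_i-\sum_{i\in J}\tilde{\mathcal{S}}_i$ as an integral $2$-chain; because $\mathcal{K}_i=\partial\mathcal{S}_i$ lies in the branch locus of $h$, a local model of the branching along $\mathcal{L}$ yields $\partial D=2\bigl(\sum_{i=1}^r\tilde{\mathcal{K}}_i-\sum_{i\in J}\tilde{\mathcal{K}}_i\bigr)$, so Proposition 5.2.1 applies with $\mathfrak{a}_\rho:=\sum_{i\notin J}\tilde{\mathcal{K}}_i$.

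Granting this, the remainder is formal and mirrors Theorem 4.2.2 line by line: $h_*\mathfrak{a}_\rho=\sum_{i\notin J}\mathcal{K}_i$, so Proposition 5.2.1 gives
$$CS_c(\rho)=\tilde\rho([\mathfrak{a}_\rho])=\rho\bigl(g([\mathfrak{a}_\rho])\bigr)=\sum_{k\in J}\sum_{i\notin J}{\rm lk}(\mathcal{K}_i,\mathcal{K}_k)\ \bmod 2 ,$$
and regrouping the ordered pairs $(i,k)$ with $i\notin J$ and $k\in J$ into unordered pairs $\{i,j\}$ with exactly one index in $J$, together with the remark of the first paragraph, produces $CS_c(\rho)=\sum_{i<j}\rho(e^+_{ij}){\rm lk}(\mathcal{K}_i,\mathcal{K}_j)\bmod 2$. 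I expect the only genuinely delicate step to be the geometric one: correctly recognizing $\Phi^1(\rho^{*}_{M}(\mathrm{id}))$ as the mod-$2$ class of the lifted Seifert surfaces and, above all, keeping track of the branching of $h$ along $\mathcal{L}$ so that the boundary of the integral lift $D$ is exactly $2\sum_{i\notin J}\tilde{\mathcal{K}}_i$ and the resulting $1$-cycle $\mathfrak{a}_\rho$ is not an integral boundary --- everything else is routine bookkeeping through the ${\rm M^2KR}$-dictionary, under which this theorem is simply the translation of Theorem 4.2.2.
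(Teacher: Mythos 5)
Your argument is correct and matches the paper's: the corollary is obtained there by simply substituting the formula of Theorem 5.3.1 for $CS_c(\rho)$ into Definition 5.1.2, and your inline derivation of that formula --- choosing the basis $b_i=e^{+}_{i,r}$, representing $\Phi^1(\rho^{*}_{M}(\mathrm{id}))$ by $\sum_{i}\tilde{{\cal S}}_i-\sum_{i\in J}\tilde{{\cal S}}_i$, and applying Proposition 5.2.1 with $\mathfrak{a}_{\rho}=\sum_{i\notin J}\tilde{{\cal K}}_i$ --- is exactly the paper's proof of that theorem. The additional justification you supply for the geometric step (the intersection-pairing computation identifying the Poincar\'e dual and the local model of the branching giving $\partial D=2\mathfrak{a}_{\rho}$) only elaborates on what the paper asserts without proof.
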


\begin{exm}
Let $L$ be a two-bridge link $B(a,b)$\ ($0<a<b,$ $b$: even, $(a,b)=1$). So we have $r=2$ and ${\rm Hom}(T_+,\mathbb{Z}/ 2\mathbb{Z})=\mathbb{Z}/ 2\mathbb{Z}$. Then, the two branched cover $M$ is the lens space $L(a,b)$. By Proposition 5.3.1 and ([Tu]), for $0 \neq \rho \in {\rm Hom}(T_+,\mathbb{Z}/ 2\mathbb{Z})$, we have
\begin{equation}
CS_c(\rho)={\displaystyle \sum_{k=1}^{b/2} (-1)^{[(2k-1)a/b]} \ {\rm mod}\ 2}, \nonumber
\end{equation}
where $[\ ]$ denotes the greatest integer function. Therefore, we also have 
\begin{equation}
Z_c(M)=
\begin{cases} 
2,& \ \mbox{if } {\displaystyle \sum_{k=1}^{b/2} (-1)^{[(2k-1)a/b]}\ {\rm is \  even,} }\\
0,& \ \  {\rm otherwise}.\\
\end{cases} \nonumber
\end{equation}
\end{exm}

\begin{rmk}
In [MOO], Murakami, Ohtsuki and Okada calculated the mod $n$ Dijkgraaf-Witten invariant for the 3-manifold obtained by the Dehn surgery on $S^3$ along  a framed link. Their formula expresses  the mod $n$ Dijkgraaf-Witten invariant in terms of the linking matrix of the framed link ([ibid; Proposition9.1]).
\end{rmk}

Hikaru Hirano\\
Faculty of Mathematics, Kyushu University\\
744, Motooka, Nishi-ku, Fukuoka, 819-0395, JAPAN\\
e-mail: ma218019@math.kyushu-u.ac.jp

\end{document}